\numberwithin{equation}{section}
\newtheorem{maintheorem}{Theorem}
\newtheorem{theorem}{Theorem}[section]
\newtheorem{definition}{Definition}[section]
\newtheorem{proposition}[theorem]{Proposition}
\newtheorem{corollary}[theorem]{Corollary}
\newtheorem{lemma}[theorem]{Lemma}
\newtheorem{remark}{Remark}[section]
\newtheorem{claim}[theorem]{Claim}
\newcommand{\vol}{\textup{vol}}
\newcommand{\Ric}{\textup{Ric}}
\newcommand{\D}{\mathcal{D}}
\newcommand{\ind}{\textup{ind}}
\newcommand{\R}{\mathcal{R}}
\newcommand{\id}{\textup{id}}
\newcommand{\T}{\mathbb{T}}
\newcommand{\pr}{\textup{pr}}
\begin{document}

\title{Scalar curvature rigidity for products of spheres and tori}

\author[T.-K.~A.~Chow]{Tsz-Kiu Aaron Chow}
\address{Department of Mathematics, Hong Kong University of Science and Technology, Hong Kong S.A.R., China}
\email{\href{chowtka@ust.hk}{chowtka@ust.hk}}

\maketitle

\begin{abstract}
We prove Llarull-type rigidity for $S^{n-m}\times\T^m$ ($3\le n\le 7$, $1\le m\le n-2$). If a closed spin $(M^n,g)$ admits a degree-nonzero map to $S^{n-m}\times\T^m$ whose spherical projection is area non-increasing, and there exists $\psi\in C^\infty(M)$ with $-\Delta_M\psi-\tfrac12|D_M\psi|^2+\tfrac12\big(R_M-(n-m)(n-m-1)\big)\ge0$, then $(M,g)$ is isometrically covered by $S^{n-m}\times\mathbb{R}^m$. For bands, we extend Gromov's torical inequality and obtain sharp width bounds: $\textup{dist}(\partial_-M,\partial_+M)\le 2\pi\sqrt{n/((n+1)\sigma)}$ when $R_M\ge (n-m)(n-m-1)+\sigma$. The method combines stable weighted slicing with a spectral Dirac operator argument.
\end{abstract}

\tableofcontents

\section{Introduction}
A classical conjecture of Geroch asserts that the $n$-torus $\T^n$ admits no Riemannian metric of positive scalar curvature. The conjecture (together with its rigidity case) was settled by Schoen--Yau for $3\le n\le 7$ via minimal hypersurfaces \cite{SY79}, and in all dimensions by Gromov--Lawson using the Dirac operator \cite{GL}. At the opposite end, Gromov asked whether a Riemannian metric $g$ on $S^n$ that strictly dominates the round metric $g_{S^n}$ must have scalar curvature strictly less than $n(n-1)$ somewhere. Llarull answered this by proving that if a closed spin $n$-manifold $(M^n,g)$ satisfies $R_M\ge n(n-1)$ and admits an area non-increasing map of nonzero degree to the round sphere, then rigidity holds in the strongest sense: the map is an isometry and hence $g$ is the pullback of the round metric \cite{Llarull}. Llarull's argument proceeds via spectral estimates for a twisted Dirac operator and index theoretic input tied to the nonzero degree. Subsequent work extended Llarull's theorem by lowering the regularity of the map \cite{Bar24, CHS24, LT22}, allowing targets beyond the sphere \cite{GS02, HSS24}, and treating manifolds with boundary under mean curvature assumptions \cite{BBHW24, CZ24, HLS, Lott21}.\\

These two results-—nonexistence and rigidity on tori, and Llarull-type rigidity on spheres--mark the extremes for scalar curvature on basic topological models. Between them lies the mixed geometry of products $S^{n-m}\times\T^m$. The product metric $g_{S^{n-m}}+ g_{\T^m}$ has scalar curvature exactly $(n-m)(n-m-1)$, the Llarull threshold for the spherical factor, yet it carries $m$ macroscopic flat directions encoding torus topology. Studying rigidity at this borderline illuminates how positive scalar curvature interacts with large-scale topology: it asks how much of the sphere’s rigidity persists once one permits $m$ flat directions, and conversely how enlargeability phenomena behind the torus obstruction constrain geometry when a positively curved factor is present. In this sense, $S^{n-m}\times\T^m$ is a natural ``midpoint" between the sphere and the torus, and scalar-curvature rigidity for degree–nonzero maps to this product probes the precise balance between curvature and topology.\\

In this paper we establish Llarull-type rigidity for degree-nonzero maps to $S^{n-m}\times\T^m$ for $3\leq n\leq 7$, together with quantitative band-width inequalities in the incomplete setting. Our main theorem asserts:

\begin{maintheorem}\label{main;thm;1}
	Let $3\leq n\leq 7$ and $1\leq m\leq n-2$. Let $M^n$ be a closed, orientable, connected spin manifold of dimension $n$. Let $g$ be a Riemannian metric on $M$. Let $\psi$ be a smooth function on $M$ such that	
	\[	-\Delta_M\psi -\frac{1}{2}|D_M\psi|^2 + \frac{1}{2}\Big(R_M - (n-m)(n-m-1)\Big) \geq 0.\]
Suppose that $\Phi:(M,g)\to (S^{n-m}\times \T^m, g_{S^{n-m}} + g_{\T^m})$ is a smooth map with the following properties:
	\begin{itemize}
		\item $\Phi$ has non-zero degree,
		\item $\pr_{S^{n-m}}\circ\Phi:(M,g)\to (S^{n-m}, g_{S^{n-m}})$ is area non-increasing when $n-m\geq 3$ and 1-Lipschitz when $n-m=2$.
	\end{itemize}
	Then $(M^n, g)$ is isometrically covered by $(S^{n-m}\times\mathbb{R}^m,\, g_{S^{n-m}} + g_{\mathbb{R}^m})$.
\end{maintheorem}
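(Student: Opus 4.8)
\medskip

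\noindent\emph{Proof strategy.} The plan is to peel off the $m$ torus directions by an iterated weighted stable minimal slicing, arriving at a closed spin hypersurface of dimension $n-m$ carrying a degree‑nonzero area non‑increasing map to $S^{n-m}$, and then to run a Llarull‑type spectral Dirac argument on it. A reformulation drives the whole scheme: setting $u=e^{\psi/2}>0$, the hypothesis on $\psi$ is equivalent to
\[
-4\Delta_M u+\big(R_M-(n-m)(n-m-1)\big)\,u\ge 0,
\]
i.e.\ to the \emph{spectral} lower bound $\lambda_1\!\big(-4\Delta_M+R_M\big)\ge\kappa$ with $\kappa:=(n-m)(n-m-1)$. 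This is exactly the form preserved by minimal slicing, with the \emph{same} constant $\kappa$ (the constant is inert in the slicing computation), and it is exactly the hypothesis under which the substitution $\varphi\mapsto u\varphi$ makes Llarull's Lichnerowicz estimate close up, since $\int|\nabla(u\theta)|^2+\tfrac14\int R\,u^2|\theta|^2=\int u^2|\nabla\theta|^2+\tfrac14\int\big(R-4\tfrac{\Delta u}{u}\big)u^2|\theta|^2$. This is why the theorem can be stated with an arbitrary $\psi$ in place of $R_M\ge\kappa$.

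\smallskip

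\noindent\emph{Step 1: slicing off the torus.} Let $\beta_1,\dots,\beta_m$ generate $H^1(\T^m;\mathbb Z)$ and put $\alpha_i:=(\pr_{\T^m}\!\circ\Phi)^*\beta_i\in H^1(M;\mathbb Z)$. Since $\vol_{S^{n-m}\times\T^m}$ is, up to scale, the cup product $\pr_{S^{n-m}}^*\omega_{S^{n-m}}\cup\pr_{\T^m}^*(\beta_1\cup\cdots\cup\beta_m)$ and $\deg\Phi\ne 0$, the class $(\pr_{S^{n-m}}\!\circ\Phi)^*\omega_{S^{n-m}}\cup\alpha_1\cup\cdots\cup\alpha_m$ is nonzero in $H^n(M;\mathbb Q)$; hence every partial cup product is nonzero and the slicing can be iterated. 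I would take $\Sigma_1$ to be a connected component, still carrying the relevant partial cup product, of a minimizer of the weighted area $\int e^{-\psi}\,d\mathcal H^{n-1}$ (more precisely the output of the appropriate weighted stable‑slicing construction) in the integral class Poincar\'e dual to $\alpha_1$. Since $3\le n\le 7$, $\Sigma_1$ is smooth; being dual to an integral degree‑one class it is two‑sided with trivial normal bundle, hence orientable and spin. The restriction of $\pr_{S^{n-m}}\!\circ\Phi$ to $\Sigma_1$ with its \emph{induced} metric remains area non‑increasing (resp.\ $1$‑Lipschitz when $n-m=2$), because passing to a submanifold with the induced metric does not increase areas and does not decrease intrinsic distances — this is precisely why one carries a weight rather than conformally rescaling. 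The construction moreover produces $\psi_1\in C^\infty(\Sigma_1)$ satisfying the same inequality with the same $\kappa$. Iterating $m$ times yields a closed, connected, orientable, spin manifold $\Sigma:=\Sigma_m$ of dimension $n-m\ge 2$, a smooth degree‑nonzero map $\phi:=(\pr_{S^{n-m}}\!\circ\Phi)|_\Sigma:\Sigma\to S^{n-m}$ that is area non‑increasing (resp.\ $1$‑Lipschitz if $n-m=2$), and $u_\Sigma:=e^{\psi_\Sigma/2}>0$ with $-4\Delta_\Sigma u_\Sigma+(R_\Sigma-\kappa)u_\Sigma\ge 0$.

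\smallskip

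\noindent\emph{Step 2: weighted Llarull on $\Sigma$.} On $\Sigma^{n-m}$ twist the spin Dirac operator by $E:=\phi^*\scS_{S^{n-m}}$ (with its chirality grading when $n-m$ is even; when $n-m$ is odd one uses the corresponding odd‑dimensional, Callias‑type, index argument). The nonzero degree forces $\ind\scD_E\ne 0$, so there is $0\ne\varphi\in\ker\scD_E$. Writing $\varphi=u_\Sigma\,\theta$ and combining the Lichnerowicz formula $\scD_E^2=\nabla^{E*}\nabla^E+\tfrac14 R_\Sigma+\mathcal R^E$ with Llarull's pointwise curvature estimate $\mathcal R^E\ge -\tfrac14\kappa$ (here the area non‑increasing, resp.\ $1$‑Lipschitz, hypothesis enters) and the weighted Bochner identity above, the relation $0=\int\langle\scD_E^2\varphi,\varphi\rangle$ expresses $0$ as a sum of nonnegative terms. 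Their vanishing gives: $\nabla^E\theta\equiv 0$; $\langle\mathcal R^E\varphi,\varphi\rangle\equiv -\tfrac14\kappa|\varphi|^2$ with $\varphi$ nowhere zero, which by Llarull's equality analysis forces $\phi^*g_{S^{n-m}}=g_\Sigma$ and hence — using $\deg\phi\ne 0$ and that $S^{n-m}$ is simply connected — that $\phi$ is an isometry and $\Sigma$ is the round $S^{n-m}$; and $R_\Sigma\equiv\kappa$, whence $-4\Delta_\Sigma u_\Sigma\ge 0$ on the closed $\Sigma$, forcing $u_\Sigma$, i.e.\ $\psi_\Sigma$, constant.

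\smallskip

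\noindent\emph{Step 3: propagating rigidity up the slices, and the main obstacle.} Because $\psi_\Sigma$ is constant and $R_\Sigma\equiv\kappa$, the left‑hand side of the slicing inequality producing $\psi_m$ from $\psi_{m-1}$ vanishes identically, so every nonnegative term in that computation vanishes: $\Sigma_m$ is totally geodesic in $\Sigma_{m-1}$, the weighted Jacobi operator has bottom eigenvalue $0$ with constant first eigenfunction, and the defect of $\psi_{m-1}$ vanishes along $\Sigma_m$. The classical Schoen--Yau‑type rigidity/foliation argument, in its weighted form, then shows that the universal cover of $\Sigma_{m-1}$ splits isometrically as $\widetilde{\Sigma_m}\times\mathbb R$ with $R_{\Sigma_{m-1}}\equiv\kappa$; applying the maximum principle to the superharmonic function $e^{\psi_{m-1}/2}$ on the closed $\Sigma_{m-1}$ then forces $\psi_{m-1}$ constant as well, so the hypotheses at level $m-1$ are again in the equality case. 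Repeating this $m$ times, one $\mathbb R$‑factor at a time, gives that the universal cover of $M$ is isometric to $\widetilde{\Sigma_m}\times\mathbb R^m$; since $\Sigma_m$ is the simply connected round $S^{n-m}$, this is $(S^{n-m}\times\mathbb R^m,\,g_{S^{n-m}}+g_{\mathbb R^m})$, as claimed. I expect the main obstacles to be: (i) the weighted stable‑slicing lemma, where one must verify that the spectral threshold $\kappa$ — and the coefficient $4$ — are reproduced \emph{exactly}, including the behaviour of the weight under passing to the first eigenfunction of the weighted Jacobi operator; and, above all, (ii) the equality analysis, in which one must fuse the Dirac equality case on $\Sigma$ with the $m$ iterated slicing equality cases into a single \emph{global} Riemannian product on the universal cover — upgrading each local splitting to a global one on a closed manifold by an openness--closedness argument and checking that the $m$ flat directions and the spherical factor are mutually compatible, i.e.\ that the holonomy reduces to $\mathrm{SO}(n-m)$.
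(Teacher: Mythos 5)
Your overall architecture is the same as the paper's: a weighted stable slicing in the sense of Brendle--Hirsch--Johne that transports the hypothesis to each slice with the same constant $\kappa=(n-m)(n-m-1)$ (your $\psi_k$ is the paper's $\log\rho_k$ with $\rho_k=\rho_{k-1}u_k$, and the preservation statement is exactly its Lemma on $\rho_k$ and its corollaries), then a Llarull-type Dirac argument on the bottom slice, then slice-by-slice propagation. Your Step 2 is in fact a slicker variant of the paper's Section 3: since you carry the \emph{pointwise} inequality $-4\Delta u+(R-\kappa)u\ge 0$ down to $\Sigma$, the substitution $\varphi=u\theta$ together with $\int|\nabla(u\theta)|^2=\int u^2|\nabla\theta|^2-\int u\,\Delta u\,|\theta|^2$ closes the Lichnerowicz estimate directly; the paper instead tests an \emph{integral} stability inequality with scalar functions and therefore has to work with $(|s|^2+\epsilon)^{1/2}$, a refined Kato inequality and a Young inequality. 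Two small slips: the weight must be $e^{+\psi}$, not $e^{-\psi}$, and the odd-dimensional bottom slice is handled (as in Llarull) by multiplying with a large circle $S^1_r$ and composing with a suspension map, not by a Callias-type argument.

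The genuine gap is in Step 3, which is where most of the work in the paper lies. From the equality case on $\Sigma_m$ one gets $A_{\Sigma_m}=0$, $D_{\Sigma_{m-1}}\log\rho_{m-1}=0$ along $\Sigma_m$, and (by the implicit function theorem) a \emph{local} foliation $\{\Sigma_{m,t}\}$ by graphs of constant weighted mean curvature. But these leaves are a priori neither weighted-area minimizers nor in the equality case, so you cannot yet assert $R_{\Sigma_{m,t}}\equiv\kappa$ or total geodesy along them, and hence cannot globalize the splitting; in particular your maximum-principle step needs $R_{\Sigma_{m-1}}\equiv\kappa$ on \emph{all} of $\Sigma_{m-1}$, which at that stage is only known along $\Sigma_m$. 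Your ``openness--closedness'' remark names this obstacle without supplying the mechanism. The paper resolves it by (a) proving that each leaf has the same weighted area as $\Sigma_m$, hence is itself a minimizer: if some leaf had positive weighted mean curvature, a brane ($\mu$-bubble) functional between $\Sigma_m$ and that leaf produces a new stable hypersurface through which one builds a fresh stable weighted slicing and re-runs the spectral Llarull argument with a strict $\delta^2$ improvement, a contradiction; and (b) phrasing the induction so that the rigidity conclusion is established for \emph{every} stable weighted slicing, so that it applies to the new slicings through each leaf. Only with (a) and (b) in hand do the harmonic lapse, $A_{\Sigma_{m,t}}=0$, and a continuity argument (as in Bray--Brendle--Neves) yield that $\Sigma_{k-1}$ is isometrically covered by $\Sigma_k\times\mathbb{R}$ with $\rho_{k-1}$ constant, and the covering statements can then be stacked to give $S^{n-m}\times\mathbb{R}^m$. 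As written, your proposal asserts this propagation rather than proving it, so the key step of the theorem is missing.
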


Our approach combines stable weighted slicing \cite{BHJ23} with a spectral Llarull-type argument compatible with the stability inequality. We construct a stable weighted slicing of order $m$, i.e., a nested family of hypersurfaces $\Sigma_m \subset \cdots \subset \Sigma_1 \subset M$
with positive weights that record the torus directions. On the bottom slice we use the stability inequality together with the Weitzenböck-Lichnerowicz identity for spinors. Equality forces $\Sigma_m$ to be a round $S^{n-m}$; we then propagate rigidity upward by foliating with weighted minimizers and applying a slice-by-slice splitting argument. It is worth noting that the local isometry statement in the case $n-m=3$ and $R_g\ge 6$ was proved by different methods in \cite{HLS}. Related consequences under higher mapping-degree assumptions was done in \cite{Tony}, which shows that when $\pr_{S^{n-m}}\circ\Phi:M^n\to S^{n-m}$ is a fiber bundle, it must be a Riemannian submersion.\\

In the incomplete setting, we extend Gromov's torical band inequality \cite{Gro18,Gro19} to bands over sphere--torus products. Bands are among the most flexible test-objects for scalar curvature: they detect how lower bounds on scalar curvature constrain the macroscopic separation of boundary components, yielding distance-type obstructions to fill-ins, doubling, and collaring. Recent progress has deepened the connection between scalar curvature and band width; see for instance \cite{CZ24, HKKZ23, HKKZ25}. For targets of the form $S^{n-m}\times \T^m$, the interplay between a positively curved spherical factor and $m$ flat directions raises a natural quantitative question: how far apart can two boundary components be kept under a scalar curvature lower bound that matches the spherical threshold up to a gap $\sigma>0$? Our results below give sharp-in-scale ($\sim\sigma^{-1/2}$) upper bounds, reflecting the model behavior of constant-curvature metrics and extending the torical width control to the mixed sphere-torus regime. In particular, requiring the spherical projection to be area non-increasing  prevents macroscopic stretching in the directions where scalar curvature is most constrained, and the band-width estimates quantify this restriction. To that end, we prove:\\

\begin{maintheorem}\label{main;thm;2}
	Let $3\leq n+1\leq 7$ and $1\leq m\leq n-2$. Let $M^{n+1}$ be an orientable, connected spin manifold of dimension $n$ with non-empty boundary with two connected components $\partial M = \partial_-M\sqcup\partial_+M$. Let $g$ be a Riemannian metric on $M$. Let $\sigma > 0$ be a positive real number. Let  $\Phi:(M,g)\to (S^{n-m}\times \T^m \times [-1,1], g_{S^{n-m}} + g_{\T^m} + dt^2)$ be a smooth map with the following properties:
	\begin{itemize}
		\item $\Phi$ has non-zero degree.
		\item $\Phi(\partial_{\pm}M)\subset S^{n-m}\times \T^m \times\{\pm1\}.$
		\item $\pr_{S^{n-m}}\circ\Phi:(M,g)\to (S^{n-m}, g_{S^{n-m}})$ is area non-increasing when $n-m\geq 3$ and 1-Lipschitz when $n-m=2$.
	\end{itemize}
The followings hold for the bandwidth of $M$:
\begin{enumerate}[(i)]
    \item Suppose that $R_M \geq  (n-m)(n-m-1) + \sigma$. Then
    \[	\textup{dist}_g(\partial_-M, \partial_+M) \leq  2\pi\sqrt{\frac{n}{(n+1)\sigma}}.\]
    \item Suppose that $\psi$ is a smooth function on $M$ such that	
	\[	-\Delta_M\psi -\frac{1}{2}|D_M\psi|^2 + \frac{1}{2}\Big(R_M - (n-m)(n-m-1) - \sigma \Big) \geq 0.\]
    Then
    \[	d(\partial_-M, \partial_+M) \leq  \frac{2\pi}{\sqrt{\sigma}}.\]
\end{enumerate}
\end{maintheorem}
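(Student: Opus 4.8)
\medskip
\noindent\emph{Proof strategy.} The plan is to mirror the proof of Theorem~\ref{main;thm;1}: use the stable weighted slicing to peel off the $m$ torus directions and, on the resulting band over $S^{n-m}\times[-1,1]$, run the spectral Llarull argument augmented by a Callias-type potential in the interval direction. The spherical factor will absorb $(n-m)(n-m-1)$ units of scalar curvature through the spinor twist, and the remaining gap $\sigma$ is spent on the potential; the two thresholds then come from two ODE comparisons for that potential. I may assume $M$ connected. Arguing by contradiction, suppose $\textup{dist}_g(\partial_-M,\partial_+M)$ exceeds the asserted bound; fix $\ell$ strictly between that bound and the distance, and a smooth $t\colon M\to[-\ell/2,\ell/2]$ with $|D_Mt|\le1$ equal to $\mp\ell/2$ near $\partial_\mp M$ (a regularization of the truncated signed distance to $\partial_-M$). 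Forgetting the interval factor of $\Phi$ leaves a degree-nonzero map $M\to S^{n-m}\times\T^m$ with the same spherical hypothesis.

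\medskip
\noindent\emph{Step 1: weighted slicing of the band.} As in Theorem~\ref{main;thm;1}, but now carried out relative to the two boundary components, I would construct a nested family of two-sided hypersurfaces $\Sigma_m\subset\cdots\subset\Sigma_1\subset M$ with positive weights, $\Sigma_i$ being a weighted minimizer inside $\Sigma_{i-1}$ in the homology class dual to the pullback of the $i$-th circle of $\T^m$ and meeting $\partial\Sigma_{i-1}$ orthogonally. The bottom slice $\Sigma:=\Sigma_m$ is a spin band of dimension $n-m+1$ separating $\partial_-M$ from $\partial_+M$; it carries a degree-nonzero map to $S^{n-m}\times[-1,1]$ whose spherical part is area non-increasing (resp.\ $1$-Lipschitz when $n-m=2$) and sends $\partial_\pm\Sigma$ to $S^{n-m}\times\{\pm1\}$, it inherits the $1$-Lipschitz function $t$ with $\textup{dist}(\partial_-\Sigma,\partial_+\Sigma)\ge\ell$, and it carries a positive weight $\rho$. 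Feeding the curvature hypothesis into the iterated weighted stability and first-variation identities of \cite{BHJ23} produces on $\Sigma$, for a suitable drift $\varphi$ assembled from $\log\rho$ (and from $\psi|_\Sigma$ in case (ii)), the weighted scalar-curvature estimate
\[
\tfrac14R_\Sigma-\tfrac12\Delta_\Sigma\varphi-\tfrac14|D_\Sigma\varphi|^2\ \ge\ \tfrac14\big((n-m)(n-m-1)+\sigma\big).
\]
The crucial structural point is that in case (i) the drift $\varphi$ originates solely from the slicing, so the $|A|^2\ge H^2/n$-type Cauchy--Schwarz estimates accumulated along the $m$ steps survive and this inequality carries the \emph{effective dimension} $n+1$ (not $\dim\Sigma$), whereas in case (ii) the prescribed $\psi$ enters $\varphi$ and spoils that accumulation.

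\medskip
\noindent\emph{Step 2: Dirac--Callias estimate and index obstruction.} On $\Sigma$ I would form the Dirac operator twisted by $\mathcal E$, the pull-back along $\pr_{S^{n-m}}\circ\Phi$ of the spinor bundle of the round $S^{n-m}$ (for $n-m=2$ one twists instead by the appropriate sum of pulled-back line bundles, which is where the $1$-Lipschitz hypothesis enters), weighted by $\varphi$, and add the Callias potential $\chi\circ t$ with $\chi\to\pm\infty$ at the ends of $[-\ell/2,\ell/2]$; after completing $\Sigma$ by cylindrical ends, call the resulting Callias operator $\D_\chi$. Its Lichnerowicz--Weitzenböck formula reads, schematically,
\[
\D_\chi^{\,2}\ =\ \nabla_\varphi^*\nabla_\varphi\ +\ \Big(\tfrac14R_\Sigma-\tfrac12\Delta_\Sigma\varphi-\tfrac14|D_\Sigma\varphi|^2\Big)\ +\ \R^{\mathcal E}\ +\ \chi^2\ \pm\ c\big(D_\Sigma(\chi\circ t)\big),
\]
and the area non-increasing (resp.\ $1$-Lipschitz) hypothesis gives the Llarull bound $\R^{\mathcal E}\ge-\tfrac14(n-m)(n-m-1)$ pointwise. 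Combining with Step~1 and $|D_\Sigma t|\le1$, the zeroth-order part of $\D_\chi^{\,2}$ is bounded below along $t$ by $\tfrac\sigma4+\chi^2-|\chi'|$. In case (ii) I would take $\chi$ solving $\chi'=\chi^2+\tfrac\sigma4$, i.e.\ $\chi=\tfrac{\sqrt\sigma}{2}\tan(\tfrac{\sqrt\sigma}{2}t)$, which blows up exactly at $t=\pm\tfrac{\pi}{\sqrt\sigma}$; since $\ell<\tfrac{2\pi}{\sqrt\sigma}$, the zeroth-order term is uniformly positive. In case (i) the effective dimension $n+1$ upgrades the admissible comparison ODE to (schematically) $\chi'=\tfrac{n+1}{n}\chi^2+\tfrac\sigma4$, whose solution first blows up after arclength $2\pi\sqrt{\tfrac{n}{(n+1)\sigma}}$; since $\ell$ lies below that threshold, the zeroth-order term is again uniformly positive. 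Finally $\deg\Phi\ne0$ forces the Callias index of $\D_\chi$ to be nonzero --- by the Callias index theorem it equals, up to a nonzero universal constant, $\deg(\pr_{S^{n-m}}\circ\Phi|_\Sigma)\cdot e(S^{n-m})$, which is nonzero when $n-m$ is even, the odd case being reduced to it by the standard Clifford-linear suspension --- so $\D_\chi$ has a nontrivial kernel element. Testing the Weitzenböck identity against it (the cylindrical-end terms vanishing because $\chi\to\pm\infty$ forces decay there) contradicts the pointwise positivity just established. Hence $\textup{dist}_g(\partial_-M,\partial_+M)$ cannot exceed the stated bounds.

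\medskip
\noindent\emph{Main obstacle.} I expect the hardest part to be Step~1 in the band-with-boundary setting: one must run the weighted minimization relative to $\partial_\pm M$ so that every $\Sigma_i$ genuinely separates the two boundary components and meets the boundary orthogonally, control the regularity of the weighted minimizers (this is where $n+1\le7$ is used), and --- most delicately --- check that the Cauchy--Schwarz improvement of \cite{BHJ23} survives all $m$ slicing steps, so that case (i) really inherits the effective dimension $n+1$ and the sharp constant $2\pi\sqrt{n/((n+1)\sigma)}$, rather than a weaker $\sqrt{(n-m)/(n-m+1)}$-type constant. This is exactly the feature lost when a prescribed $\psi$ is present, which is why (ii) only yields $2\pi/\sqrt\sigma$. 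Secondary difficulties are the bookkeeping in the Weitzenböck estimate --- combining the weighted Lichnerowicz term, the Llarull curvature term $\R^{\mathcal E}$, and the Callias term with the exact split $(n-m)(n-m-1)+\sigma$, and handling the $n-m=2$ twist carefully --- and verifying that the Llarull twist does not destroy the non-vanishing of the Callias index and that the kernel element decays fast enough on the cylindrical ends.
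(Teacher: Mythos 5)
Your overall architecture inverts the paper's, and the inversion creates a genuine gap at Step~1. The paper runs the two ingredients in the opposite order: it first takes a single warped $\mu$-bubble in the $t$-direction inside $M^{n+1}$ (with potential $h=-2(1-\epsilon)\frac{\pi}{L}\tan(\frac{\pi}{L}\beta-\frac{\pi}{2})$ and barriers given by $\partial_-M$ and a regular level set of the interval coordinate), which produces a \emph{closed} hypersurface $\Sigma^n$ in the interior of $M$ carrying a degree-nonzero map to $S^{n-m}\times\T^m$ and a weight $\rho$ satisfying a strict version of the hypothesis of Theorem~\ref{main;thm;1}; it then quotes Theorem~\ref{main;thm;1} (slicing plus spectral Llarull) on that closed $\Sigma$ to reach a contradiction. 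In your scheme the slicing comes first, and the hypersurfaces dual to the circle factors of $\T^m$ are necessarily \emph{relative} cycles running from $\partial_-M$ to $\partial_+M$; as free-boundary weighted minimizers their stability inequality carries the boundary term $-\int_{\partial\Sigma_i}\mathrm{II}_{\partial M}(\nu,\nu)f^2$, and Theorem~\ref{main;thm;2} imposes no convexity or mean-curvature hypothesis on $\partial M$, so this term is uncontrolled and the weighted scalar-curvature estimate you need on the bottom band does not follow. You flag Step~1 as the main obstacle but attribute the difficulty to separation, regularity and constant-tracking; the actual obstruction is that your ordering forces boundary-touching slices before any potential has been introduced to keep minimizers away from $\partial M$, which is exactly what the paper's ``$\mu$-bubble first, then slice the closed bubble'' ordering is designed to avoid.

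A second gap concerns case (i). Your claim that the $|A|^2\ge H^2/n$ improvements ``accumulate'' through the $m$ slicing steps so that the final Callias ODE on the $(n-m+1)$-dimensional band inherits effective dimension $n+1$ is unsubstantiated, and structurally implausible: the intermediate slices are weighted \emph{minimal} (their mean curvature is tied to the drift, not to the band potential), so their $H^2$ terms do not feed the comparison ODE, and the natural effective dimension in your ordering would be that of the bottom band, not $n+1$. The $\frac{n}{n+1}$ constant in (i) comes from keeping the $\tfrac1n H_\Sigma^2$ term in the second variation of the \emph{single} $\mu$-bubble inside the $(n+1)$-dimensional $M$ (the paper only sketches this in a remark, proving (ii) in detail), and that term is precisely what your slice-first ordering discards. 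Your Step~2 (Callias operator with blowing-up potential, Llarull bound on $\R^{\mathcal E}$, nonvanishing index via the Euler class and suspension in the odd case) is a reasonable Cecchini--Zeidler-type endgame and could likely be made rigorous, but as written the proposal does not yield either stated width bound without repairing Step~1 and the effective-dimension claim.
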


\bigskip

\begin{remark}
For the sake of exposition we will henceforth work under the stronger hypothesis that
    \[\pr_{S^{n-m}}\circ\Phi:(M,g)\to (S^{n-m}, g_{S^{n-m}})\]
is 1-Lipschitz. This assumption streamlines the notations used in the proof. All arguments, however, carry over verbatim when $n-m\geq 3$ if one merely assumes that the map is area non-increasing.
\end{remark}

\section{Stable weighted slicing}

The purpose of this section is two-fold: In Proposition~\ref{slicing;exist;prop} we recall that the topological data coming from a 1-Lipschitz map of non-zero degree forces the existence of a stable weighted slicing of order $m$; In Proposition \ref{slicing;prop1} we derive an integral inequality on each slice that will later serve as the spectral substitute for the scalar curvature lower bound, when we apply Llarull type argument to study the bottom slice in Section 3. 
We begin by recalling the notion of stable weighted slicing as in \cite{BHJ23}.

\begin{definition}\label{slicing;def}[Stable weighted slicing of order $m$]
Let $1\leq m\leq n-2$. Let $(M^n, g)$ be a closed, connected, orientable Riemannian manifold of dimension $n$.

 A stable weighted slicing of order $m$ consists of a collection of closed, connected, orientable submanifolds $\Sigma_k$, $k\in\{0,1,\dots, m\}$,  a collection of positive functions $u_k\in C^{\infty}(\Sigma_k), k\in\{1,\dots, m\}$, and a collection of positive functions $\rho_k\in C^{\infty}(\Sigma_k), k\in\{0,1,\dots, m\}$  with the following properties:
 	\begin{enumerate}[(i)]
		\item $(\Sigma_0, \rho_0) = (M, e^{\psi})$ .
		\item For each $k\in \{0,1,\dots, m\}$, we have $\dim \Sigma_k = n-k$.
		\item For each $k\in \{1,\dots, m\}$, $\Sigma_k$ is a closed, connected, embedded, orientable hypersurface in $\Sigma_{k-1}$. Moreover, $\Sigma_k$ is a stable critical point of the $\rho_{k-1}$-weighted area
				\[	\mathcal{H}_{\rho_{k-1}}^{n-k}(\Sigma) = \int_{\Sigma}\rho_{k-1} d\mu\]
				is the class of hypersurfaces $\Sigma\subset \Sigma_{k-1}$.
		\item For each $k\in \{1,\dots, m\}$, the function $u_k\in C^{\infty}(\Sigma_k)$ is a first eigenfunction of the stability operator associated with the $\rho_{k-1}$-weighted area.
		\item For each $k\in \{1,\dots, m\}$, the function $\rho_k\in C^{\infty}(\Sigma_k)$ is given by  $\rho_k = \rho_{k-1}|_{\Sigma_k}\cdot u_k$.
	\end{enumerate}
\end{definition}

Now, we establish the existence of a stable weighted slicing under the topological assumptions in Theorem \ref{main;thm;1}, in a form tailored to the sphere-torus product.

\begin{proposition}\label{slicing;exist;prop}
	Let $2\leq n\leq 7$ and $1\leq m\leq n-2$. Let $(M^n, g)$ be a closed, connected Riemannian manifold of dimension $n$ with a smooth map $\Phi:(M^n,g)\to (S^{n-m}\times \T^m, g_{S^{n-m}} + g_{\mathbb{T}^{m}})$ of non-zero degree such that $\pr_{S^{n-m}}\circ\Phi :(M^n,g)\to (S^{n-m}, g_{S^{n-m}})$ is 1-Lipschitz. Then there exists a stable weighted slicing
		\[	\Sigma_m\subset\Sigma_{m-1}\subset\cdots\subset\Sigma_1\subset\Sigma_0 = M^n\]
		 of order $m$.
		 
		  In addition, there exists a collection of smooth maps $\Phi_k: (\Sigma_k, g|_{\Sigma_k})\to (S^{n-m}\times\mathbb{T}^{m-k}, g_{S^{n-m}} + g_{\mathbb{T}^{m-k}})$, $k\in\{0,1,\dots, m\}$, such that each $\Phi_k$ has non-zero degree and $\pr_{S^{n-m}}\circ\Phi_k :(\Sigma_k,g|_{\Sigma_k})\to (S^{n-m}, g_{S^{n-m}})$ is 1-Lipschitz.
\end{proposition}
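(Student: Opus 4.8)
The plan is to build the slicing by induction on $k$, peeling off one circle factor of the torus at each step. The base step is $\Sigma_0=M$, $\rho_0=e^{\psi}$, $\Phi_0=\Phi$, with $\rho_0$ as prescribed by Definition~\ref{slicing;def}(i). Assume inductively that for some $k\in\{1,\dots,m\}$ one has produced a closed, connected, orientable $\Sigma_{k-1}$ of dimension $n-k+1$, a positive weight $\rho_{k-1}\in C^{\infty}(\Sigma_{k-1})$, and a smooth map $\Phi_{k-1}\colon\Sigma_{k-1}\to S^{n-m}\times\T^{m-k+1}$ of non-zero degree whose spherical component $\pr_{S^{n-m}}\circ\Phi_{k-1}$ is $1$-Lipschitz. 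Let $q\colon S^{n-m}\times\T^{m-k+1}\to S^1$ be the projection onto the last circle factor, $\pi\colon S^{n-m}\times\T^{m-k+1}\to S^{n-m}\times\T^{m-k}$ the projection forgetting it, and set $f=q\circ\Phi_{k-1}\colon\Sigma_{k-1}\to S^1$.

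First I would check that the integral class $\alpha:=f^{*}[d\theta]\in H^{1}(\Sigma_{k-1};\mathbb{Z})$ is non-zero. Normalizing the product volume form $\omega$ on the target to unit total mass and applying the Künneth formula exhibits $[\omega]$ as a cup product one of whose factors is $q^{*}[d\theta]$; since $\Phi_{k-1}^{*}[\omega]$ equals $\deg(\Phi_{k-1})\neq 0$ times the fundamental cohomology class of $\Sigma_{k-1}$, the corresponding factor $\alpha=\Phi_{k-1}^{*}q^{*}[d\theta]$ cannot vanish, so $\mathrm{PD}(\alpha)\in H_{n-k}(\Sigma_{k-1};\mathbb{Z})$ is a non-zero class (represented, e.g., by a regular level set $f^{-1}(\theta_0)$). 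I then minimize the $\rho_{k-1}$-weighted area $\mathcal{H}_{\rho_{k-1}}^{n-k}(\Sigma)=\int_{\Sigma}\rho_{k-1}\,d\mu$ over integral cycles in a non-zero homology class Poincaré-dual to a multiple of $\alpha$ (its indivisible part). Since $\dim\Sigma_{k-1}=n-k+1\le 7$, the minimizer has codimension one in an ambient manifold of dimension at most $7$, so the existence and interior regularity theory for weighted-area minimizers --- exactly as developed in \cite{BHJ23} --- yields a non-empty, smooth, embedded, closed hypersurface $T\subset\Sigma_{k-1}$ that is a \emph{stable critical point} of $\mathcal{H}_{\rho_{k-1}}^{n-k}$ (a homology minimizer is critical for compactly supported normal variations and has non-negative second variation); being the support of a mass-minimizing integral current in the oriented $\Sigma_{k-1}$, $T$ is two-sided and orientable. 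Write $\alpha=d\,[\text{class of }T]$ in cohomology with $d\ge1$.

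Next I would pass to a good connected component. Writing $\omega'$ for the unit-mass volume form on $S^{n-m}\times\T^{m-k}$ and using $\omega=\pm\,\pi^{*}\omega'\wedge d\theta$ together with the fact that $T$ is Poincaré dual to $\alpha/d$, a short computation gives
\[
\int_{T}(\pi\circ\Phi_{k-1})^{*}\omega'=\pm\frac1d\big\langle\,\alpha\smile(\pi\circ\Phi_{k-1})^{*}[\omega'],\ [\Sigma_{k-1}]\,\big\rangle=\pm\frac1d\deg(\Phi_{k-1}),
\]
which, being the degree of a map between closed oriented $(n-k)$-manifolds, is forced to be a non-zero integer. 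Since degrees add over connected components, some connected component $\Sigma_k$ of $T$ carries $\Phi_k:=\pi\circ\Phi_{k-1}\big|_{\Sigma_k}$ with non-zero degree; this $\Sigma_k$ is closed, connected, embedded, orientable, of dimension $n-k$, and remains a stable critical point of the $\rho_{k-1}$-weighted area (a component of a stable minimizer minimizes, hence is stable, in its own class), while $\pr_{S^{n-m}}\circ\Phi_k$, being the restriction of a $1$-Lipschitz map to a submanifold with the induced metric, is again $1$-Lipschitz. Finally I take $u_k\in C^{\infty}(\Sigma_k)$ to be a first eigenfunction of the stability operator of the $\rho_{k-1}$-weighted area on $\Sigma_k$ --- which may be chosen strictly positive and has non-negative eigenvalue by stability --- and put $\rho_k=\rho_{k-1}|_{\Sigma_k}\cdot u_k>0$. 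This verifies all clauses of Definition~\ref{slicing;def}, and after $m$ iterations produces the slicing $\Sigma_m\subset\cdots\subset\Sigma_1\subset\Sigma_0=M$ together with the maps $\Phi_0,\dots,\Phi_m$.

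I expect the main obstacle to be the double bookkeeping inside the inductive step: one must arrange that the weighted-area minimizer lies in the homology class recording a torus direction \emph{and} that the forgetful composition retains non-zero degree on a single connected component, rather than the contributions of different components cancelling. This requires both the minimization/regularity input of \cite{BHJ23} (the source of the restriction $n\le 7$) and the Poincaré-duality identity above, which pins down $\int_T(\pi\circ\Phi_{k-1})^{*}\omega'=\pm d^{-1}\deg(\Phi_{k-1})$ as a non-zero integer. The remaining items --- $1$-Lipschitz control of the spherical projection under restriction, orientability and connectedness of the slices, and positivity of the eigenfunctions $u_k$ --- are then routine.
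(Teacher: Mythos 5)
Your proposal is correct, and at bottom it is the same construction the paper uses: the slices are weighted-area minimizers in the codimension-one homology classes dual to the pulled-back circle classes, the maps $\Phi_k$ are just restrictions of the components of $\Phi$, and their degrees are computed by integrating pulled-back volume forms over the slices. The difference is one of packaging: the paper quotes \cite[Theorem 1.5]{BHJ23} as a black box for the existence of the stable weighted slicing and only adds the observation that, by the identity $\int_{\Sigma_k}\omega_{k+1}\wedge\cdots\wedge\omega_m\wedge\Omega=\deg(\Phi)$ extracted from the proof in \cite{BHJ23}, the restricted maps have degree exactly $\deg(\Phi)$; you instead re-derive the inductive minimization yourself, replacing that identity by the Poincaré-duality computation $\int_{T}(\pi\circ\Phi_{k-1})^*\omega'=\pm\deg(\Phi_{k-1})/d$. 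Your version is slightly more flexible (minimizing in the primitive class may change the degree by the divisibility factor $d$, which is harmless since only non-vanishing is claimed) and it makes explicit two points the paper leaves implicit, namely the selection of a connected component carrying non-zero degree and the fact that such a component inherits criticality and stability. The only technicality you gloss over is that the mass-minimizing integral current in the chosen class may a priori carry integer multiplicities on its components; this does not affect the argument (stability and the degree count pass to each multiplicity-one component) and is in any case part of the regularity package of \cite{BHJ23} that you invoke, so I would not call it a gap.
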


\begin{proof}
	The existence of stable weighted slicings is given in \cite[Theorem 1.5]{BHJ23}, it suffices to construct maps $\Phi_k$ which satisfy the assertions. We first recall the set-up in \cite[Theorem 1.5]{BHJ23}. Denote the projection of $\Phi$ onto the factors by $\phi_0:M\to S^{n-m}$ and $\phi_1,\dots, \phi_m: M\to S^1$. By assumption, the map $\phi_0 = \pr_{S^{n-m}}\circ\Phi$ is 1-Lipschitz. Let $\Theta$ be a top-form of $S^{n-m}$ such that $\int_{S^{n-m}}\Theta = 1$, and $\theta$ be a one-form of $S^1$ such that $\int_{S^1}\theta = 1$.  Define the pull-back forms $\Omega = \phi_0^*\Theta$ and $\omega_j = \phi_j^*\theta$. From the proof of \cite[Theorem 1.5]{BHJ23}, for each $k\in \{1,\dots, m\}$, the slice $\Sigma_k$ is a smooth hypersurface in $\Sigma_{k-1}$ such that 
		\[	\int_{\Sigma_k}\omega_{k+1}\wedge\cdots\wedge\omega_m\wedge\Omega = \deg(\Phi).\]
				
		Now, we define the smooth maps $\Phi_k: (\Sigma_k, g|_{\Sigma_k})\to (S^{n-m}\times\mathbb{T}^{m-k}, g_{S^{n-m}} + g_{\mathbb{T}^{m-k}})$ by 
		\begin{align*}
			\begin{cases}
				&\Phi_k := \left(\phi_0|_{\Sigma_k},\phi_{k+1}|_{\Sigma_k},\dots,  \phi_{m}|_{\Sigma_k}\right),\quad \text{if}\quad k\leq m-1,\\
				&\Phi_m := \phi_0|_{\Sigma_m}.
			\end{cases}
		\end{align*}
		It follows from the definition that all $\pr_{S^{n-m}}\circ\Phi_k = \phi_0|_{\Sigma_k}$ are 1-Lipschitz. Moreover,
		\begin{align*}
			\deg(\Phi_k) = \int_{\Sigma_k}\omega_{k+1}\wedge\cdots\wedge\omega_m\wedge\Omega = \deg(\Phi) \neq 0.
		\end{align*}
\end{proof}

\medskip
With a stable weighted slicing in hand we now turn to quantitative properties on each slice. The following lemma records the weighted stability inequality that underpins curvature comparisons later in the paper.

\begin{lemma}\label{slicing;lem1}
	Suppose that $\Sigma_k$ is a stable critical point of the $\rho_{k-1}$-weighted area
				\[	\mathcal{H}_{\rho_{k-1}}^{n-k}(\Sigma) = \int_{\Sigma}\rho_{k-1} d\mu\]
				is the class of hypersurfaces $\Sigma\subset \Sigma_{k-1}$, then
	\begin{align*}
	0 \leq& 	\int_{\Sigma_k}\rho_{k-1}|D_{\Sigma_k}f|^2  - \frac{1}{2}\int_{\Sigma_k}(R_{\Sigma_{k-1}} - R_{\Sigma_k} + |A_{\Sigma_k}|^2)\rho_{k-1}f^2\\
		&\notag + \int_{\Sigma_k} \left(\Delta_{\Sigma_{k-1}}\log\rho_{k-1} + \frac{1}{2} |D_{\Sigma_{k-1}}\log\rho_{k-1}|^2
		 \right) \rho_{k-1}f^2\\
		&\notag - \int_{\Sigma_k} \left(\Delta_{\Sigma_{k}}\log\rho_{k-1} + \frac{1}{2} |D_{\Sigma_{k}}\log\rho_{k-1}|^2
		 \right) \rho_{k-1}f^2
	\end{align*}
for all $f\in C^{\infty}(\Sigma_k)$.
\end{lemma}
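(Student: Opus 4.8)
The plan is to extract from the hypothesis that $\Sigma_k$ is a \emph{stable critical point} of $\mathcal{H}^{n-k}_{\rho_{k-1}}$ its two variational consequences, and then to rewrite the stability inequality in terms of scalar curvatures only, using the traced Gauss equation together with the tangential/normal splitting of derivatives of $\log\rho_{k-1}$ along $\Sigma_k$.

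First I would record the first-variation condition. Writing $\nu$ for a unit normal of $\Sigma_k$ in $\Sigma_{k-1}$ and $H:=\di_{\Sigma_k}\nu$ for the scalar mean curvature, computing $\tfrac{d}{dt}\int_{\Sigma_t}\rho_{k-1}\,d\mu_t$ along a normal variation $f\nu$ shows that criticality is equivalent to vanishing of the weighted mean curvature,
\[
H+\nu(\log\rho_{k-1})=0,
\]
an identity I will use several times. Differentiating once more at this weighted-minimal slice, and absorbing the resulting first-order drift term $\langle D_{\Sigma_k}\log\rho_{k-1},D_{\Sigma_k}f\rangle$ by an integration by parts against $\rho_{k-1}f(-\Delta_{\Sigma_k}f)$, the second-variation (stability) inequality takes the form
\[
0\le\int_{\Sigma_k}\rho_{k-1}|D_{\Sigma_k}f|^2-\int_{\Sigma_k}\rho_{k-1}f^2\Big(|A_{\Sigma_k}|^2+\Ric_{\Sigma_{k-1}}(\nu,\nu)-\textup{Hess}_{\Sigma_{k-1}}(\log\rho_{k-1})(\nu,\nu)\Big)
\]
for every $f\in C^\infty(\Sigma_k)$; since $\Sigma_k$ is closed no boundary terms occur and all such $f$ are admissible. (One may instead simply quote this formula from \cite{BHJ23}.)

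It then remains to substitute three standard identities. The traced Gauss equation for $\Sigma_k\subset\Sigma_{k-1}$ gives
\[
\Ric_{\Sigma_{k-1}}(\nu,\nu)=\tfrac12\big(R_{\Sigma_{k-1}}-R_{\Sigma_k}+H^2-|A_{\Sigma_k}|^2\big).
\]
Splitting the ambient Laplacian of $\log\rho_{k-1}$ along $\Sigma_k$ into intrinsic Laplacian, normal Hessian and a mean-curvature term gives
\[
\textup{Hess}_{\Sigma_{k-1}}(\log\rho_{k-1})(\nu,\nu)=\Delta_{\Sigma_{k-1}}\log\rho_{k-1}-\Delta_{\Sigma_k}\log\rho_{k-1}-H\,\nu(\log\rho_{k-1}),
\]
and the first-variation relation turns the last term into $H^2$. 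Finally the orthogonal decomposition of the gradient gives $\big(\nu(\log\rho_{k-1})\big)^2=|D_{\Sigma_{k-1}}\log\rho_{k-1}|^2-|D_{\Sigma_k}\log\rho_{k-1}|^2$, which also equals $H^2$. Feeding all of this into the stability inequality, the three occurrences of $H^2$ regroup, via $\tfrac12H^2=\tfrac12|D_{\Sigma_{k-1}}\log\rho_{k-1}|^2-\tfrac12|D_{\Sigma_k}\log\rho_{k-1}|^2$, into the two Dirichlet-energy terms of $\log\rho_{k-1}$ in the statement, while the remaining terms match directly; this yields the claimed inequality.

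I do not expect a substantive obstacle: the argument is a bookkeeping computation with classical Riemannian identities. The only points requiring care are the sign conventions for the scalar mean curvature and for the Hessian restriction formula, and recalling the weighted second-variation formula correctly — all routine, and already available in \cite{BHJ23}.
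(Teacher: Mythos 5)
Your proposal is correct and follows essentially the same route as the paper's proof: both start from the weighted second-variation formula of \cite{BHJ23}, absorb the drift term $\langle D_{\Sigma_k}\log\rho_{k-1},D_{\Sigma_k}f\rangle\rho_{k-1}f$ by integration by parts on the closed slice, and then substitute the traced Gauss equation, the normal-Hessian restriction identity, the criticality condition $H_{\Sigma_k}=-\langle D_{\Sigma_{k-1}}\log\rho_{k-1},\nu_{\Sigma_k}\rangle$, and the splitting $\langle D_{\Sigma_{k-1}}\log\rho_{k-1},\nu_{\Sigma_k}\rangle^2=|D_{\Sigma_{k-1}}\log\rho_{k-1}|^2-|D_{\Sigma_k}\log\rho_{k-1}|^2$. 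The only difference is the order of the bookkeeping, which is immaterial.
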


\begin{proof}
	The second variation of weighted area \cite{BHJ23} gives
	
	\begin{align*}
		0 \leq &-\int_{\Sigma_k}\rho_{k-1}f\Delta_{\Sigma_k}f -\int_{\Sigma_k}(|A_{\Sigma_k}|^2 + \Ric_{\Sigma_{k-1}}(\nu_{\Sigma_k}, \nu_{\Sigma_k}))\rho_{k-1}f^2\\
		& + \int_{\Sigma_k}(D_{\Sigma_{k-1}}^2\log \rho_{k-1})(\nu_{\Sigma_k}, \nu_{\Sigma_k})\rho_{k-1}f^2 - \int_{\Sigma_k}\langle D_{\Sigma_k}\log\rho_{k-1},\, D_{\Sigma_k}f\rangle\rho_{k-1}f
	\end{align*}
for all $f\in C^{\infty}(\Sigma_k)$. By the Gauss equation, 

	\begin{align*}
		|A_{\Sigma_k}|^2 + \Ric_{\Sigma_{k-1}}(\nu_{\Sigma_k}, \nu_{\Sigma_k}) = \frac{1}{2}(R_{\Sigma_{k-1}} - R_{\Sigma_k} + |A_{\Sigma_k}|^2 + H_{\Sigma_k}^2).
	\end{align*}
Moreover, we have the identity
	
	\begin{align*}
		&(D_{\Sigma_{k-1}}^2\log \rho_{k-1})(\nu_{\Sigma_k}, \nu_{\Sigma_k})\\ &= \Delta_{\Sigma_{k-1}}\log\rho_{k-1} - \Delta_{\Sigma_{k}}\log\rho_{k-1}
		 - H_{\Sigma_k} \langle D_{\Sigma_{k-1}}\log\rho_{k-1},\, \nu_{\Sigma_k}\rangle.
	\end{align*}
Adding the above two identities to the second variation formula, we obtain

\begin{align*}
		0 \leq &-\int_{\Sigma_k}\rho_{k-1}f\Delta_{\Sigma_k}f - \frac{1}{2}\int_{\Sigma_k}(R_{\Sigma_{k-1}} - R_{\Sigma_k} + |A_{\Sigma_k}|^2)\rho_{k-1}f^2\\
		& + \int_{\Sigma_k} (\Delta_{\Sigma_{k-1}}\log\rho_{k-1} - \Delta_{\Sigma_{k}}\log\rho_{k-1}
		 ) \rho_{k-1}f^2 - \int_{\Sigma_k}\langle D_{\Sigma_k}\log\rho_{k-1},\, D_{\Sigma_k}f\rangle\rho_{k-1}f \\
		&  - \frac{1}{2}\int_{\Sigma_k}H_{\Sigma_k}^2\rho_{k-1}f^2 - \int_{\Sigma_k}H_{\Sigma_k} \langle D_{\Sigma_{k-1}}\log\rho_{k-1},\, \nu_{\Sigma_k}\rangle \rho_{k-1}f^2
\end{align*}
for all $f\in C^{\infty}(\Sigma_k)$. Since $\Sigma_k$ is a critical point of the $\rho_{k-1}$-weighted area, 

\[	H_{\Sigma_k} = -\langle D_{\Sigma_{k-1}} \log\rho_{k-1},\, \nu_{\Sigma_k}\rangle.\]
Subsequently,

\begin{align}\label{slicing;eqn1}
			0 \leq &-\int_{\Sigma_k}\rho_{k-1}f\Delta_{\Sigma_k}f - \frac{1}{2}\int_{\Sigma_k}(R_{\Sigma_{k-1}} - R_{\Sigma_k} + |A_{\Sigma_k}|^2)\rho_{k-1}f^2\\
		&\notag + \int_{\Sigma_k} (\Delta_{\Sigma_{k-1}}\log\rho_{k-1} - \Delta_{\Sigma_{k}}\log\rho_{k-1}
		 ) \rho_{k-1}f^2 - \int_{\Sigma_k}\langle D_{\Sigma_k}\log\rho_{k-1},\, D_{\Sigma_k}f\rangle\rho_{k-1}f \\
		&\notag  + \frac{1}{2}\int_{\Sigma_k} \langle D_{\Sigma_{k-1}}\log\rho_{k-1},\, \nu_{\Sigma_k}\rangle^2 \rho_{k-1}f^2\\
		= &\notag  \int_{\Sigma_k}\rho_{k-1}|D_{\Sigma_k}f|^2  - \frac{1}{2}\int_{\Sigma_k}(R_{\Sigma_{k-1}} - R_{\Sigma_k} + |A_{\Sigma_k}|^2)\rho_{k-1}f^2\\
		&\notag + \int_{\Sigma_k} (\Delta_{\Sigma_{k-1}}\log\rho_{k-1} - \Delta_{\Sigma_{k}}\log\rho_{k-1}
		 ) \rho_{k-1}f^2 + \frac{1}{2}\int_{\Sigma_k} \langle D_{\Sigma_{k-1}}\log\rho_{k-1},\, \nu_{\Sigma_k}\rangle^2 \rho_{k-1}f^2\\
		= &\notag  \int_{\Sigma_k}\rho_{k-1}|D_{\Sigma_k}f|^2  - \frac{1}{2}\int_{\Sigma_k}(R_{\Sigma_{k-1}} - R_{\Sigma_k} + |A_{\Sigma_k}|^2)\rho_{k-1}f^2\\
		&\notag + \int_{\Sigma_k} \left(\Delta_{\Sigma_{k-1}}\log\rho_{k-1} + \frac{1}{2} |D_{\Sigma_{k-1}}\log\rho_{k-1}|^2
		 \right) \rho_{k-1}f^2\\
		&\notag - \int_{\Sigma_k} \left(\Delta_{\Sigma_{k}}\log\rho_{k-1} + \frac{1}{2} |D_{\Sigma_{k}}\log\rho_{k-1}|^2
		 \right) \rho_{k-1}f^2
\end{align}
for all $f\in C^{\infty}(\Sigma_k)$, where in the last step we have used the identity

\[	 \langle D_{\Sigma_{k-1}}\log\rho_{k-1},\, \nu_{\Sigma_k}\rangle^2 = |D_{\Sigma_{k-1}}\log\rho_{k-1}|^2 - |D_{\Sigma_{k}}\log\rho_{k-1}|^2.\]

\end{proof}

\bigskip

\begin{lemma}[c.f. \cite{BH24, SY79, SY17}]\label{slicing;lem2}
	For each $k\in \{1,\dots, m\}$, the function $\rho_k$ satisfies the inequality
	
	\begin{align*}
	&\Delta_{\Sigma_k}\log\rho_k + \frac{1}{2} |D_{\Sigma_{k}}\log\rho_{k}|^2	\\
	\leq\, & \Delta_{\Sigma_{k-1}}\log\rho_{k-1} + \frac{1}{2} |D_{\Sigma_{k-1}}\log\rho_{k-1}|^2 -\frac{1}{2}(R_{\Sigma_{k-1}} - R_{\Sigma_k} + |A_{\Sigma_k}|^2) - \frac{1}{2} |D_{\Sigma_{k}}\log u_{k}|^2.
	\end{align*}

\end{lemma}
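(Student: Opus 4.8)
The plan is to read off the $\rho_{k-1}$-weighted Jacobi (stability) operator from the second variation computed in the proof of Lemma~\ref{slicing;lem1}, use that $u_k$ is its positive principal eigenfunction, and convert the resulting eigenvalue equation into the asserted pointwise inequality for $\rho_k=\rho_{k-1}|_{\Sigma_k}\cdot u_k$ via the substitution $\log\rho_k=\log\rho_{k-1}|_{\Sigma_k}+\log u_k$. Concretely, the quadratic form displayed at the start of that proof exhibits the weighted stability operator on $\Sigma_k$ as
\[
L_k f=\Delta_{\Sigma_k}f+\langle D_{\Sigma_k}\log\rho_{k-1},\,D_{\Sigma_k}f\rangle+q_k f,\qquad q_k:=|A_{\Sigma_k}|^2+\Ric_{\Sigma_{k-1}}(\nu_{\Sigma_k},\nu_{\Sigma_k})-(D^2_{\Sigma_{k-1}}\log\rho_{k-1})(\nu_{\Sigma_k},\nu_{\Sigma_k}),
\]
with $-\int_{\Sigma_k}\rho_{k-1}f\,L_kf\ge0$ for all $f$. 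Standard elliptic theory for this drift--Schrödinger operator gives a positive first eigenfunction, which is exactly $u_k$ by Definition~\ref{slicing;def}(iv), so that $L_ku_k=-\lambda_1 u_k$ with $\lambda_1\ge0$. Dividing by $u_k>0$ and using $\Delta_{\Sigma_k}u_k/u_k=\Delta_{\Sigma_k}\log u_k+|D_{\Sigma_k}\log u_k|^2$ yields
\[
\Delta_{\Sigma_k}\log u_k+|D_{\Sigma_k}\log u_k|^2+\langle D_{\Sigma_k}\log\rho_{k-1},\,D_{\Sigma_k}\log u_k\rangle=-q_k-\lambda_1\le -q_k.
\]

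Next I would expand the left-hand side of the assertion. Writing $\log\rho_k=\log\rho_{k-1}|_{\Sigma_k}+\log u_k$ and grouping,
\[
\Delta_{\Sigma_k}\log\rho_k+\tfrac12|D_{\Sigma_k}\log\rho_k|^2=\Delta_{\Sigma_k}\log\rho_{k-1}+\tfrac12|D_{\Sigma_k}\log\rho_{k-1}|^2+\big(\Delta_{\Sigma_k}\log u_k+\langle D_{\Sigma_k}\log\rho_{k-1},D_{\Sigma_k}\log u_k\rangle\big)+\tfrac12|D_{\Sigma_k}\log u_k|^2,
\]
and substituting the previous display (the two $|D_{\Sigma_k}\log u_k|^2$ contributions combining into $-\tfrac12|D_{\Sigma_k}\log u_k|^2$) gives
\[
\Delta_{\Sigma_k}\log\rho_k+\tfrac12|D_{\Sigma_k}\log\rho_k|^2\le \Delta_{\Sigma_k}\log\rho_{k-1}+\tfrac12|D_{\Sigma_k}\log\rho_{k-1}|^2-q_k-\tfrac12|D_{\Sigma_k}\log u_k|^2.
\]

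It then remains to rewrite $q_k$, which I would do using exactly the three identities already invoked in Lemma~\ref{slicing;lem1}: the Gauss equation $|A_{\Sigma_k}|^2+\Ric_{\Sigma_{k-1}}(\nu_{\Sigma_k},\nu_{\Sigma_k})=\tfrac12(R_{\Sigma_{k-1}}-R_{\Sigma_k}+|A_{\Sigma_k}|^2+H_{\Sigma_k}^2)$, the Hessian-trace identity for $(D^2_{\Sigma_{k-1}}\log\rho_{k-1})(\nu_{\Sigma_k},\nu_{\Sigma_k})$, and the weighted-minimality relation $H_{\Sigma_k}=-\langle D_{\Sigma_{k-1}}\log\rho_{k-1},\nu_{\Sigma_k}\rangle$ together with $\langle D_{\Sigma_{k-1}}\log\rho_{k-1},\nu_{\Sigma_k}\rangle^2=|D_{\Sigma_{k-1}}\log\rho_{k-1}|^2-|D_{\Sigma_k}\log\rho_{k-1}|^2$. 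A short computation gives
\[
q_k=\tfrac12\big(R_{\Sigma_{k-1}}-R_{\Sigma_k}+|A_{\Sigma_k}|^2\big)+\Delta_{\Sigma_k}\log\rho_{k-1}-\Delta_{\Sigma_{k-1}}\log\rho_{k-1}+\tfrac12|D_{\Sigma_k}\log\rho_{k-1}|^2-\tfrac12|D_{\Sigma_{k-1}}\log\rho_{k-1}|^2.
\]
Feeding this back into the preceding inequality, the $\Delta_{\Sigma_k}\log\rho_{k-1}$ and $\tfrac12|D_{\Sigma_k}\log\rho_{k-1}|^2$ terms cancel and what remains is precisely the right-hand side in the statement.

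Each step is elementary; the place where care is needed is the bookkeeping---keeping the two ambient Laplacians and gradients ($\Sigma_k$ versus $\Sigma_{k-1}$) distinct, fixing the sign convention so that stability genuinely yields $\lambda_1\ge0$ (so that discarding $-\lambda_1$ moves the inequality in the right direction), and checking that the positive principal eigenfunction furnished by elliptic theory is the $u_k$ of Definition~\ref{slicing;def}. The only non-algebraic ingredient is the existence and positivity of the first eigenfunction of the drift operator $L_k$, which is standard.
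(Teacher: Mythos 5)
Your proof is correct and follows essentially the same route the paper relies on: the paper defers the proof of this lemma to \cite{BH24, SY79, SY17}, but the identical computation (with an extra $\delta$-term) is carried out in Claim~\ref{induction;isometry;lem1}, namely the eigenvalue equation for the first eigenfunction $u_k$ of the weighted stability (drift--Schr\"odinger) operator divided by $u_k$, combined with the Gauss equation, the Hessian-trace identity, the first-variation relation $H_{\Sigma_k}=-\langle D_{\Sigma_{k-1}}\log\rho_{k-1},\nu_{\Sigma_k}\rangle$, and the tangential--normal decomposition of the gradient. Your bookkeeping -- the sign convention giving $\lambda_1\ge 0$, the cancellation producing $-\tfrac12|D_{\Sigma_k}\log u_k|^2$, and the rewriting of $q_k$ -- all checks out.
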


\begin{corollary}[c.f. \cite{BH24}]\label{slicing;cor1}
	For each $k\in \{1,\dots, m\}$, the function $\rho_k$ satisfies the inequality
	
	\begin{align*}
	&\Delta_{\Sigma_k}\log\rho_k + \frac{1}{2} |D_{\Sigma_{k}}\log\rho_{k}|^2	\\
	\leq\, &  \Delta_{M}\log\rho_{0} + \frac{1}{2} |D_{\Sigma_{M}}\log\rho_{0}|^2-\frac{1}{2}(R_{M} - R_{\Sigma_k} ) - \frac{1}{2} \sum_{j=1}^k|D_{\Sigma_{j}}\log u_{j}|^2  - \frac{1}{2} \sum_{j=1}^k|A_{\Sigma_j}|^2.
	\end{align*}
\end{corollary}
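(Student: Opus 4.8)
The plan is to iterate Lemma~\ref{slicing;lem2} over the index $k$, telescoping the one-step comparison into a comparison between $\Sigma_k$ and the top level $M = \Sigma_0$. First I would write out Lemma~\ref{slicing;lem2} for each $j \in \{1,\dots,k\}$, which reads
\[
\Delta_{\Sigma_j}\log\rho_j + \tfrac12|D_{\Sigma_j}\log\rho_j|^2
\le \Delta_{\Sigma_{j-1}}\log\rho_{j-1} + \tfrac12|D_{\Sigma_{j-1}}\log\rho_{j-1}|^2
- \tfrac12(R_{\Sigma_{j-1}} - R_{\Sigma_j} + |A_{\Sigma_j}|^2) - \tfrac12|D_{\Sigma_j}\log u_j|^2.
\]
Summing these inequalities from $j=1$ to $j=k$, the left-hand side of the $j$-th inequality and the first two terms of the right-hand side of the $(j+1)$-th inequality cancel in pairs, so the sum collapses to
\[
\Delta_{\Sigma_k}\log\rho_k + \tfrac12|D_{\Sigma_k}\log\rho_k|^2
\le \Delta_{\Sigma_0}\log\rho_0 + \tfrac12|D_{\Sigma_0}\log\rho_0|^2
- \tfrac12\sum_{j=1}^k(R_{\Sigma_{j-1}} - R_{\Sigma_j}) - \tfrac12\sum_{j=1}^k|A_{\Sigma_j}|^2 - \tfrac12\sum_{j=1}^k|D_{\Sigma_j}\log u_j|^2.
\]

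Next I would observe that the scalar-curvature differences themselves telescope: $\sum_{j=1}^k(R_{\Sigma_{j-1}} - R_{\Sigma_j}) = R_{\Sigma_0} - R_{\Sigma_k} = R_M - R_{\Sigma_k}$. Substituting this, together with the identification $\Sigma_0 = M$ and $\rho_0 = e^\psi$ from Definition~\ref{slicing;def}(i), yields exactly the claimed inequality
\[
\Delta_{\Sigma_k}\log\rho_k + \tfrac12|D_{\Sigma_k}\log\rho_k|^2
\le \Delta_M\log\rho_0 + \tfrac12|D_{\Sigma_M}\log\rho_0|^2 - \tfrac12(R_M - R_{\Sigma_k}) - \tfrac12\sum_{j=1}^k|D_{\Sigma_j}\log u_j|^2 - \tfrac12\sum_{j=1}^k|A_{\Sigma_j}|^2.
\]
This is a straightforward finite induction on $k$ (or equivalently a telescoping sum), with Lemma~\ref{slicing;lem2} supplying the inductive step and the base case $k=1$ being Lemma~\ref{slicing;lem2} itself with $\Sigma_0 = M$.

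There is essentially no serious obstacle here: the only point requiring a modicum of care is bookkeeping the two cancellations — the gradient/Laplacian terms at intermediate levels, and the telescoping of the scalar curvatures — and confirming that the sign conventions in the two sums ($|A_{\Sigma_j}|^2$ and $|D_{\Sigma_j}\log u_j|^2$ both entering with $-\tfrac12$) are preserved under summation, which they are since each term appears with the same sign at every level. I would also note in passing that each $|A_{\Sigma_j}|^2 \ge 0$ and $|D_{\Sigma_j}\log u_j|^2 \ge 0$, so these terms may later be discarded when only an upper bound on $\Delta_{\Sigma_k}\log\rho_k + \tfrac12|D_{\Sigma_k}\log\rho_k|^2$ in terms of $R_M$, $R_{\Sigma_k}$, and the data on $M$ is needed; but retaining them, as the statement does, costs nothing and is useful for the equality analysis in the rigidity argument.
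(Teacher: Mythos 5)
Your proposal is correct and is exactly the argument the paper intends (the corollary is stated without proof as the iteration of Lemma~\ref{slicing;lem2}, cf.\ \cite{BH24}): chaining the one-step inequality from $j=1$ to $j=k$, with the intermediate $\Delta_{\Sigma_j}\log\rho_j+\tfrac12|D_{\Sigma_j}\log\rho_j|^2$ terms cancelling and $\sum_{j=1}^k(R_{\Sigma_{j-1}}-R_{\Sigma_j})$ telescoping to $R_M-R_{\Sigma_k}$. The only point worth making explicit is that each $j$-th inequality is a pointwise statement on $\Sigma_j$, so the chaining is performed after restricting all of them to $\Sigma_k\subset\Sigma_j$, which is legitimate by the nesting of the slices.
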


Combining Lemma \ref{slicing;lem1} and Corollary \ref{slicing;cor1}, we thus obtain

\begin{proposition}\label{slicing;prop1}
	Let $2\leq n\leq 7$ and $1\leq m\leq n-1$. Let $M^n$ be a closed, connected spin manifold of dimension $n$. Let $g$ be a Riemannian metric on $M^n$ and 
    \[	\Sigma_m\subset \cdots\subset \Sigma_1\subset\Sigma_0 = M^n\]
    is a stable weighted slicing of order $m$. Then for each slice of order $k\in\{1,\dots,m\}$, we have
	\begin{align}\label{slicing;eqn3}
	0 \leq& 	\int_{\Sigma_k}\rho_{k-1}|D_{\Sigma_k}f|^2  - \frac{1}{2}\int_{\Sigma_k}(R_{M} - R_{\Sigma_k} + |A_{\Sigma_k}|^2)\rho_{k-1}f^2\\
	&\notag + \int_{\Sigma_k} \left(\Delta_{\Sigma_{0}}\log\rho_{0} + \frac{1}{2} |D_{\Sigma_{0}}\log\rho_{0}|^2
		 \right) \rho_{k-1}f^2\\
		&\notag - \int_{\Sigma_k} \left(\Delta_{\Sigma_{k}}\log\rho_{k-1} + \frac{1}{2} |D_{\Sigma_{k}}\log\rho_{k-1}|^2
		 \right) \rho_{k-1}f^2
\end{align}
for all $f\in C^{\infty}(\Sigma_k)$.
\end{proposition}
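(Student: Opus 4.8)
The plan is to substitute the pointwise estimate of Corollary~\ref{slicing;cor1} into the weighted stability inequality of Lemma~\ref{slicing;lem1}, exploiting positivity of the weight $\rho_{k-1}$ to preserve the direction of the inequality. The proposition is thus a bookkeeping combination of the two preceding results, with one small wrinkle at $k=1$.

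First I would peel off the base case $k=1$. Here $\Sigma_0 = M$, $\rho_0 = e^{\psi}$, and $R_{\Sigma_0} = R_M$, so the middle two lines of Lemma~\ref{slicing;lem1} already read verbatim with $R_M$ in place of $R_{\Sigma_{k-1}}$ and with $\rho_0,\Sigma_0$ in place of $\rho_{k-1},\Sigma_{k-1}$; hence \eqref{slicing;eqn3} holds with no further work. For $k\geq 2$ I would start from Lemma~\ref{slicing;lem1} and isolate the coefficient of $\rho_{k-1}f^2$ produced by its second and third lines,
\[
-\tfrac12\bigl(R_{\Sigma_{k-1}} - R_{\Sigma_k} + |A_{\Sigma_k}|^2\bigr) + \Delta_{\Sigma_{k-1}}\log\rho_{k-1} + \tfrac12|D_{\Sigma_{k-1}}\log\rho_{k-1}|^2 .
\]
Applying Corollary~\ref{slicing;cor1} at slice order $k-1$ (admissible since $k-1\in\{1,\dots,m\}$) and discarding the nonnegative terms $\tfrac12\sum_{j=1}^{k-1}|D_{\Sigma_j}\log u_j|^2$ and $\tfrac12\sum_{j=1}^{k-1}|A_{\Sigma_j}|^2$ gives
\[
\Delta_{\Sigma_{k-1}}\log\rho_{k-1} + \tfrac12|D_{\Sigma_{k-1}}\log\rho_{k-1}|^2 \leq \Delta_{\Sigma_0}\log\rho_0 + \tfrac12|D_{\Sigma_0}\log\rho_0|^2 - \tfrac12\bigl(R_M - R_{\Sigma_{k-1}}\bigr),
\]
so the $R_{\Sigma_{k-1}}$ contributions cancel upon substitution and the displayed coefficient is bounded above by
\[
-\tfrac12\bigl(R_M - R_{\Sigma_k} + |A_{\Sigma_k}|^2\bigr) + \Delta_{\Sigma_0}\log\rho_0 + \tfrac12|D_{\Sigma_0}\log\rho_0|^2 .
\]
Since $\rho_{k-1} > 0$ and $f^2 \geq 0$, replacing the coefficient by this larger quantity only increases the right-hand side of Lemma~\ref{slicing;lem1}; because its first and last lines ($\int_{\Sigma_k}\rho_{k-1}|D_{\Sigma_k}f|^2$ and $-\int_{\Sigma_k}(\Delta_{\Sigma_k}\log\rho_{k-1} + \tfrac12|D_{\Sigma_k}\log\rho_{k-1}|^2)\rho_{k-1}f^2$) are already identical to those of \eqref{slicing;eqn3}, this yields exactly the claimed inequality.

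I do not expect a genuine obstacle here. The only points requiring care are (a) that Corollary~\ref{slicing;cor1} is invoked at the admissible order $k-1$, which forces the $k=1$ case to be handled separately via the tautology $\Sigma_0 = M$, $R_{\Sigma_0}=R_M$; and (b) the sign bookkeeping, namely that the $R_{\Sigma_{k-1}}$ terms cancel on substitution and that the discarded second-fundamental-form and $\log u_j$-gradient terms carry the nonnegative sign needed to weaken the inequality in the desired direction.
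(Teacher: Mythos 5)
Your proof is correct and is essentially the paper's argument: the paper obtains Proposition~\ref{slicing;prop1} precisely by combining Lemma~\ref{slicing;lem1} with Corollary~\ref{slicing;cor1} (applied to $\rho_{k-1}$), exactly as you do, with the $R_{\Sigma_{k-1}}$ terms cancelling and positivity of $\rho_{k-1}f^2$ preserving the inequality. Your separate treatment of $k=1$ (where the statement reduces to Lemma~\ref{slicing;lem1} itself since $\Sigma_0=M$, $\rho_0=e^{\psi}$) is a reasonable way to handle the fact that Corollary~\ref{slicing;cor1} is stated for orders $1,\dots,m$.
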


\begin{corollary}\label{slicing;cor2}
    Assume the hypothesss of Theorem~\ref{main;thm;1} and let 
        \[	\Sigma_m\subset \cdots\subset \Sigma_1\subset\Sigma_0 = M^n\]
    be a stable weighted slicing of order $m$. Then for each slice of order $k\in\{1,\dots,m\}$, we have
	\begin{align}\label{slicing;eqn4}
	0 \leq& 	\int_{\Sigma_k}\rho_{k-1}|D_{\Sigma_k}f|^2  - \frac{1}{2}\int_{\Sigma_k} \left(  (n-m)(n-m-1) -  R_{\Sigma_k}\right) \rho_{k-1}f^2\\
		&\notag - \int_{\Sigma_k} \left(\Delta_{\Sigma_{k}}\log\rho_{k-1} + \frac{1}{2} |D_{\Sigma_{k}}\log\rho_{k-1} |^2
		 \right) \rho_{k-1}f^2
\end{align}
for all $f\in C^{\infty}(\Sigma_k)$.
\end{corollary}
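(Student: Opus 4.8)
The plan is to obtain \eqref{slicing;eqn4} directly from inequality \eqref{slicing;eqn3} of Proposition~\ref{slicing;prop1} by feeding in the scalar curvature hypothesis of Theorem~\ref{main;thm;1}. The key observation is that \eqref{slicing;eqn3} already contains verbatim the two terms that survive in \eqref{slicing;eqn4}, namely $\int_{\Sigma_k}\rho_{k-1}|D_{\Sigma_k}f|^2$ and $-\int_{\Sigma_k}\big(\Delta_{\Sigma_k}\log\rho_{k-1}+\tfrac12|D_{\Sigma_k}\log\rho_{k-1}|^2\big)\rho_{k-1}f^2$; so only the two remaining terms on the right-hand side of \eqref{slicing;eqn3}, the curvature term and the $\rho_0$-term, need to be processed.

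First I would invoke Definition~\ref{slicing;def}(i): since $(\Sigma_0,\rho_0)=(M,e^{\psi})$ we have $\log\rho_0=\psi$, hence $\Delta_{\Sigma_0}\log\rho_0=\Delta_M\psi$ and $|D_{\Sigma_0}\log\rho_0|^2=|D_M\psi|^2$, so the $\rho_0$-term in \eqref{slicing;eqn3} is exactly $\int_{\Sigma_k}\big(\Delta_M\psi+\tfrac12|D_M\psi|^2\big)\rho_{k-1}f^2$. Next I rewrite the hypothesis of Theorem~\ref{main;thm;1} in the equivalent pointwise form $\Delta_M\psi+\tfrac12|D_M\psi|^2\le \tfrac12\big(R_M-(n-m)(n-m-1)\big)$, which holds everywhere on $M$ and in particular along $\Sigma_k$. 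Because $\rho_{k-1}>0$ on $\Sigma_k$ and $f^2\ge 0$, multiplying this pointwise inequality by $\rho_{k-1}f^2$ and integrating over $\Sigma_k$ replaces the $\psi$-term by $\tfrac12\int_{\Sigma_k}\big(R_M-(n-m)(n-m-1)\big)\rho_{k-1}f^2$, and this substitution only enlarges the right-hand side of \eqref{slicing;eqn3}.

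It then remains to combine this with the curvature term $-\tfrac12\int_{\Sigma_k}\big(R_M-R_{\Sigma_k}+|A_{\Sigma_k}|^2\big)\rho_{k-1}f^2$ already present in \eqref{slicing;eqn3}: the two $R_M$ contributions cancel, leaving $-\tfrac12\int_{\Sigma_k}\big((n-m)(n-m-1)-R_{\Sigma_k}+|A_{\Sigma_k}|^2\big)\rho_{k-1}f^2$. Discarding the nonnegative term $|A_{\Sigma_k}|^2$ (once more using $\rho_{k-1}f^2\ge 0$) produces precisely the right-hand side of \eqref{slicing;eqn4}. I do not expect a genuine obstacle here — this is a bookkeeping simplification rather than a new argument — the only points that require care are that the sign convention in the hypothesis of Theorem~\ref{main;thm;1} is arranged so that replacing the $\psi$-term is in the favorable direction, and that the strict positivity of the weight $\rho_{k-1}$ is what permits passing from the pointwise curvature bound to the integrated inequality.
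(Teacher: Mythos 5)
Your proposal is correct and follows exactly the paper's own route: substituting $\rho_0=e^{\psi}$ into Proposition~\ref{slicing;prop1}, bounding the $\psi$-term via the pointwise hypothesis of Theorem~\ref{main;thm;1}, cancelling the $R_M$ contributions, and discarding the nonnegative $|A_{\Sigma_k}|^2$ term. The only difference is that you spell out the bookkeeping which the paper leaves implicit.
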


\begin{proof}
This follows from putting the requirements that $\rho_0 = e^{\psi}$ and 
\[	-\Delta_M\psi -\frac{1}{2}|D_M\psi|^2 + \frac{1}{2}\Big(R_M - (n-m)(n-m-1)\Big) \geq 0\]
into Proposition~\ref{slicing;prop1}.
\end{proof}

\bigskip

\section{A spectral Llarull's theorem}\label{spectral;Llarull;section}

This section aims to extend Llarull's argument to the spectral setting which arises from the stability of weighted slicing. To that end, we will prove

\begin{theorem}\label{spectral;Llarull;thm}
	Let $N^n$ be a closed, connected spin manifold of dimension $n$. Let $g$ be a Riemannian metric on $N$. Suppose that there exists a smooth function $\rho >0$ on $N$ such that
\begin{align}\label{spectral;Llarull;assumption}
	0 \leq& 	\int_{N}\rho|D_{N}f|^2  - \frac{1}{2}\int_{N}(n(n-1) - R_{N})\rho\, f^2 - \int_{N} \left(\Delta_{N}\log\rho + \frac{1}{2} |D_{N}\log\rho|^2
		 \right) \rho\, f^2
\end{align}
for all $f\in C^{\infty}(N)$.
Suppose that $\Phi:(N,g)\to (S^n, g_{S^n})$ is a smooth map with the following properties:
	\begin{itemize}
		\item $\Phi$ has non-zero degree,
		\item $\Phi$ is 1-Lipschitz.
	\end{itemize}
	Then $\rho$ is a constant on $N$ and $\Phi$ is a Riemannian isometry.
\end{theorem}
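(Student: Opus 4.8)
\emph{Strategy.} The plan is to adapt Llarull's twisted Dirac argument, but to run it for a zeroth-order modification of the Dirac operator adapted to the weight $\rho$, so that the spectral inequality \eqref{spectral;Llarull;assumption} takes over the role that the pointwise bound $R_N\ge n(n-1)$ plays in Llarull's original proof. Let $\scS N$ be the spinor bundle of $(N,g)$, equip $E:=\Phi^*\scS S^n$ with the pulled-back connection, and let $\scD$ be the Dirac operator on $\scS N\otimes E$, with Lichnerowicz formula $\scD^2=\nabla^*\nabla+\tfrac14 R_N+\mathscr R^E$. Set $\zeta:=\tfrac12\,D_N\log\rho$ and consider the perturbed operator $\scD_\rho:=\scD+\zeta\cdot$ (Clifford multiplication by $\zeta$). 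The Clifford relations and the skew-adjointness of Clifford multiplication show that $\scD_\rho$ is formally self-adjoint with respect to the weighted volume $\rho\,dV$; being a bundle-endomorphism perturbation of $\scD$ it has the same index. By the index-theoretic input behind Llarull's theorem (Atiyah--Singer together with the computation of $\ch(E)$, treating even and odd $n$ as in \cite{Llarull}), the non-vanishing of $\deg\Phi$ forces a nonzero spinor $\tilde\Psi$ with $\scD_\rho\tilde\Psi=0$.

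\emph{Weighted Weitzenböck.} Since $\zeta$ is a gradient, expanding $\scD_\rho^2$ and using the Lichnerowicz formula gives
\[
\scD_\rho^2=\nabla^*_\rho\nabla+\frac{R_N}{4}+\mathscr R^E-\frac12\Delta_N\log\rho-\frac14|D_N\log\rho|^2 ,
\]
where $\nabla^*_\rho$ is the adjoint of $\nabla$ for $\rho\,dV$, so that $\int_N\rho\langle\nabla^*_\rho\nabla\tilde\Psi,\tilde\Psi\rangle=\int_N\rho|\nabla\tilde\Psi|^2$. Pairing $\scD_\rho^2\tilde\Psi=0$ against $\tilde\Psi$ in $L^2(\rho\,dV)$ yields
\[
0=\int_N\rho|\nabla\tilde\Psi|^2+\frac14\int_N\rho R_N|\tilde\Psi|^2+\int_N\rho\langle\mathscr R^E\tilde\Psi,\tilde\Psi\rangle-\frac12\int_N\rho(\Delta_N\log\rho)|\tilde\Psi|^2-\frac14\int_N\rho|D_N\log\rho|^2|\tilde\Psi|^2 .
\]
The decisive feature is that the weight has produced exactly \emph{one half} of the combination $\Delta_N\log\rho+\tfrac12|D_N\log\rho|^2$ occurring in \eqref{spectral;Llarull;assumption}. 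Invoking Llarull's pointwise curvature estimate for $1$-Lipschitz maps onto the unit sphere, $\langle(\tfrac14 R_N+\mathscr R^E)\sigma,\sigma\rangle\ge\tfrac14(R_N-n(n-1))|\sigma|^2$ for all $\sigma$, together with Kato's inequality $\bigl|D_N|\tilde\Psi|\bigr|\le|\nabla\tilde\Psi|$, and writing $w:=|\tilde\Psi|$, the identity turns into
\[
\int_N\rho|D_N w|^2\ \le\ \frac14\int_N\rho\bigl(n(n-1)-R_N\bigr)w^2+\frac12\int_N\rho(\Delta_N\log\rho)w^2+\frac14\int_N\rho|D_N\log\rho|^2 w^2\ =:\ U .
\]

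\emph{Feeding in the spectral hypothesis and extracting rigidity.} Apply \eqref{spectral;Llarull;assumption} with test function $w$ — replacing $w$ by $\sqrt{w^2+\varepsilon^2}$ and letting $\varepsilon\to0$ to deal with the zero set, which is harmless because $\bigl|D_N\sqrt{w^2+\varepsilon^2}\bigr|\le|\nabla\tilde\Psi|$ and the zeroth-order terms pass to the limit. It reads
\[
0\ \le\ \int_N\rho|D_N w|^2+\frac12\int_N\rho\bigl(R_N-n(n-1)\bigr)w^2-\int_N\rho\Bigl(\Delta_N\log\rho+\tfrac12|D_N\log\rho|^2\Bigr)w^2 .
\]
Adding one half of this inequality to one half of the trivial inequality $\int_N\rho|D_N w|^2\ge0$ reproduces exactly $\int_N\rho|D_N w|^2-U$, whence $\int_N\rho|D_N w|^2\ge U$; combined with $\int_N\rho|D_N w|^2\le U$ this gives equality, and since the two non-negative quantities we added sum to zero we get $\int_N\rho|D_N w|^2=0$, hence also $U=0$. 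The identity of the previous paragraph together with Llarull's estimate then forces $\int_N\rho|\nabla\tilde\Psi|^2\le U=0$, so $\tilde\Psi$ is a nonzero parallel spinor (in particular $|\tilde\Psi|$ is a positive constant and $\tilde\Psi$ vanishes nowhere), and Llarull's curvature estimate is saturated at every point. From $\scD_\rho\tilde\Psi=\zeta\cdot\tilde\Psi=0$ and $\tilde\Psi\ne0$ we get $\zeta=0$, i.e.\ $\rho$ is constant; and pointwise saturation of Llarull's estimate, using that $\Phi$ is $1$-Lipschitz, forces every singular value of $d\Phi$ to equal $1$, i.e.\ $\Phi^*g_{S^n}=g$. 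Since $\deg\Phi\ne0$ and $S^n$ is simply connected, a local isometry $N\to S^n$ of non-zero degree is a global isometry, so $\Phi$ is a Riemannian isometry.

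\emph{Main obstacle.} The heart of the matter is the weighted Weitzenböck computation: one must check that modifying $\scD$ by exactly $\tfrac12(D_N\log\rho)\cdot$ produces precisely one half of the zeroth-order combination $\Delta_N\log\rho+\tfrac12|D_N\log\rho|^2$ in \eqref{spectral;Llarull;assumption} — this is the coincidence that makes the ``$\tfrac12+\tfrac12$'' accounting of the last step close — and this requires keeping all curvature normalizations and signs consistent with Llarull's estimate. Secondary technical points are the index-theoretic input in the odd-dimensional case and verifying that Llarull's equality analysis applies to the harmonic spinor of the modified operator $\scD_\rho$.
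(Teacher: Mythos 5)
Your proposal is correct, and while it rests on the same skeleton as the paper (index-theoretic existence of a twisted harmonic spinor from $\deg\Phi\neq0$, the Weitzenb\"ock formula, Llarull's curvature estimate, and the spectral hypothesis tested on the modulus of the spinor), it organizes the weight $\rho$ genuinely differently. The paper keeps the ordinary twisted Dirac operator, takes $s$ with $\D^E s=0$, and inserts $f=\rho^{-1/2}(|s|^2+\epsilon)^{1/2}$ into \eqref{spectral;Llarull;assumption}; the resulting cross term $\langle D_N\log\rho,D_N\varphi\rangle\varphi$ is absorbed by a calibrated Young inequality (Lemma~\ref{Llarull;lem1}, constant $\tfrac{n-1}{4(n+1)}$), which must then be compensated by the refined Kato inequality of Claim~\ref{refined;Kato;ineq}, and constancy of $\rho$ is read off from the retained $-\tfrac1{4(n+1)}\int|D_N\log\rho|^2|s|^2$ term. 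You instead pass to $\scD_\rho=\scD+\tfrac12(D_N\log\rho)\cdot\,=\rho^{-1/2}\scD\,\rho^{1/2}$, which is indeed self-adjoint for $\rho\,dV$ and has the same index; your weighted Weitzenb\"ock formula checks out (since $\zeta$ is a gradient, $\sum_i e_i\cdot\nabla_{e_i}\zeta\cdot=-\tfrac12\Delta_N\log\rho$, and the drift $-\nabla_{D_N\log\rho}$ is exactly what converts $\nabla^*\nabla$ into the weighted adjoint), producing precisely half of the combination $\Delta_N\log\rho+\tfrac12|D_N\log\rho|^2$. Because the cross term never appears, the crude Kato inequality suffices, and your factor-of-two comparison ($A\ge2U$ from the hypothesis versus $A\le U$ from Weitzenb\"ock--Llarull) correctly forces $U=0$, a parallel nowhere-vanishing spinor, pointwise saturation $\mu_j\equiv1$, and $\zeta\cdot\tilde\Psi=0$, hence $\rho$ constant. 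In fact the two proofs are secretly the same computation, since $\ker\scD_\rho=\rho^{-1/2}\ker\scD$, so your $w=|\tilde\Psi|$ is the paper's test function $\rho^{-1/2}|s|$; what your bookkeeping buys is the elimination of the refined Kato step and the sharp Young constant, plus a cleaner rigidity extraction via the parallel spinor. The one place you are thinner than you acknowledge is odd $n$: on $N\times S^1_r$ Llarull's estimate holds only up to an $O(1/r)$ error, so for finite $r$ the spinor is merely approximately parallel and the chain ``parallel $\Rightarrow$ nowhere vanishing $\Rightarrow$ rigidity'' must be replaced by a limiting or contradiction argument as $r\to\infty$; the paper glosses this point in essentially the same way by deferring to Llarull, so it is not a gap relative to the paper, but it deserves more than the label ``secondary technical point.''
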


\begin{remark}
For $n\geq 3$ the conclusion still holds if the 1-Lipschitz condition on $\Phi$ is replaced by the weaker assumption that $\Phi$ is area non-increasing; the argument in the sequel of this section below goes through unchanged under this milder assumption. 
\end{remark}

\begin{lemma}\label{Llarull;lem1}
	Let $N$ be a closed, spin manifold and $\rho>0$ a smooth function on $N$. Suppose that $\varphi\in C^{\infty}(N)$ is a smooth function, then
	\begin{align}\label{key;estimate}
	&\int_{N}\rho\,|D_{N}(\rho^{-1/2}\varphi)|^2 - 	\int_{N} \left(\Delta_{N}\log\rho + \frac{1}{2} |D_{N}\log\rho|^2
		 \right)\varphi^2\\
		 \notag \leq &\, -\frac{1}{2(n+1)}\int_N |D_N\log\rho|^2\varphi^2 +  \frac{2n}{n-1}\int_{N}|D_{N}\varphi|^2.
	\end{align}

\end{lemma}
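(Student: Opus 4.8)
The plan is to reduce the claimed integral inequality to a pointwise application of Young's inequality, after one integration by parts. First I would set $w:=\log\rho$ and expand the first term on the left. Since
\[
D_N(\rho^{-1/2}\varphi)=\rho^{-1/2}\Bigl(D_N\varphi-\tfrac12\varphi\,D_N w\Bigr),
\]
we obtain
\[
\rho\,\bigl|D_N(\rho^{-1/2}\varphi)\bigr|^2=|D_N\varphi|^2-\varphi\langle D_N\varphi,D_N w\rangle+\tfrac14\varphi^2|D_N w|^2 .
\]
Because $N$ is closed, integrating by parts gives $-\int_N(\Delta_N w)\,\varphi^2=2\int_N\varphi\langle D_N w,D_N\varphi\rangle$. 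Substituting both identities, the left-hand side of \eqref{key;estimate} collapses to
\[
\int_N|D_N\varphi|^2+\int_N\varphi\langle D_N\varphi,D_N w\rangle-\tfrac14\int_N\varphi^2|D_N w|^2 .
\]

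Next I would rearrange the desired inequality using the elementary identities $\tfrac{2n}{n-1}-1=\tfrac{n+1}{n-1}$ and $\tfrac14-\tfrac1{2(n+1)}=\tfrac{n-1}{4(n+1)}$; it becomes equivalent to
\[
\int_N\varphi\langle D_N\varphi,D_N w\rangle\ \le\ \frac{n+1}{n-1}\int_N|D_N\varphi|^2+\frac{n-1}{4(n+1)}\int_N\varphi^2|D_N w|^2 .
\]
This now follows pointwise from Cauchy--Schwarz, $|\varphi\langle D_N\varphi,D_N w\rangle|\le |\varphi|\,|D_N\varphi|\,|D_N w|$, together with Young's inequality $ab\le \tfrac{1}{2\varepsilon}a^2+\tfrac{\varepsilon}{2}b^2$ applied with $a=|D_N\varphi|$, $b=|\varphi|\,|D_N w|$ and the specific choice $\varepsilon=\tfrac{n-1}{2(n+1)}$, which makes the two coefficients match exactly. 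Integrating over $N$ then closes the argument.

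The computation has no genuine obstacle; the only point requiring care is tracking signs and the factor $\tfrac14$ coming from $|\tfrac12\varphi\,D_N w|^2$ after the integration by parts, and then selecting the constant $\varepsilon$ so as to produce precisely the sharp coefficients $\tfrac1{2(n+1)}$ and $\tfrac{2n}{n-1}$ — these exact values are what is needed to make the spectral term compatible with the twisted-Dirac Weitzenböck estimate in the proof of Theorem~\ref{spectral;Llarull;thm}. I would also remark that the spin hypothesis plays no role in this lemma and is only carried along for consistency with the rest of the section.
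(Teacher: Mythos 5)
Your proof is correct and follows essentially the same route as the paper: expand $\rho\,|D_N(\rho^{-1/2}\varphi)|^2$, integrate $\Delta_N\log\rho$ by parts to collapse the left-hand side to $\int_N|D_N\varphi|^2+\int_N\varphi\langle D_N\varphi,D_N\log\rho\rangle-\tfrac14\int_N\varphi^2|D_N\log\rho|^2$, and then absorb the cross term by Young's inequality with exactly the weights $\tfrac{n+1}{n-1}$ and $\tfrac{n-1}{4(n+1)}$. The only (immaterial) difference is that you rearrange the target inequality before applying Young, whereas the paper applies the same Young estimate directly and then collects coefficients.
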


\begin{proof}
We calculate
	\[	D_{N}(\rho^{-1/2}\varphi) = -\frac{1}{2}\rho^{-3/2}\varphi D_{N}\rho  + \rho^{-1/2} D_{N}\varphi,\]
	this gives
	\begin{align*}
		&\rho|D_{N}(\rho^{-1/2}\varphi)|^2\\
		=\, & \frac{1}{4}\rho^{-2}|D_{N}\rho|^2 \varphi^2 -\rho^{-1}\langle D_{N}\rho,\, D_{N}\varphi\rangle \varphi + |D_{N}\varphi|^2\\
		=\, & \frac{1}{4}|D_{N}\log\rho|^2\varphi^2 - \langle D_{N}\log\rho,\, D_{N}\varphi\rangle\varphi + |D_{N}\varphi|^2.
	\end{align*}
On the other hand, integrating by parts gives

\begin{align*}
	\int_{N} (\Delta_{N}\log\rho) \varphi^2 &= -\int_{N}\langle D_{N}\log\rho,\, D_{N}\varphi^2\rangle \\
	&= - 2\int_{N}\langle D_{N}\log\rho,\, D_{N}\varphi\rangle \varphi.
\end{align*}
Putting the two identities together, we get
	\begin{align*}
	&\int_{N}\rho\,|D_{N}(\rho^{-1/2}\varphi)|^2 - 	\int_{N} \left(\Delta_{N}\log\rho + \frac{1}{2} |D_{N}\log\rho|^2
		 \right)\varphi^2\\
		 =\, & \int_{N} \left( \frac{1}{4}|D_{N}\log\rho|^2\varphi^2 - \langle D_{N}\log\rho,\, D_{N}\varphi\rangle\varphi + |D_{N}\varphi|^2 \right)\\
		 &\quad  + \int_{N} 2\langle D_{N}\log\rho,\, D_{N}\varphi\rangle \varphi - \int_{N} \frac{1}{2} |D_{N}\log\rho|^2\varphi^2 \\
		 =\, & -\frac{1}{4}\int_{N}|D_{N}\log\rho|^2\varphi^2 +   \int_{N} \langle D_{N}\log\rho,\, D_{N}\varphi\rangle \varphi + \int_{N}|D_{N}\varphi|^2.
	\end{align*}
Next, using the Young's inequality,
\begin{align*}
	\langle D_{N}\log\rho,\, D_{N}\varphi\rangle \varphi &\leq \frac{n-1}{4(n+1)}|D_N\log\rho||\varphi|^2 + \frac{n+1}{n-1}|D_N\varphi|^2. 
\end{align*}
Putting everything together, the Lemma follows.

\end{proof}

\subsection{Proof of Theorem \ref{spectral;Llarull;thm} when $n$ is even}
Choose a spin structure on $N$ and let $S$ be the spinor bundle over $N$. Let $E_0$ be the spinor bundle of the round sphere $S^n$. Since $n$ is even, we have the splittings $S = S^+\oplus S^-$ and  $E_0 = E_0^+\oplus E_0^-$. We consider the twisted bundles
	\[	E = (S^+\otimes\Phi^*E_0^+)\oplus (S^-\otimes\Phi^*E_0^-),\]
	\[	F = (S^+\otimes\Phi^*E_0^-)\oplus (S^-\otimes\Phi^*E_0^+).\]
We then consider the twisted Dirac operators
	\[	\D_+: H^1(N, S^+\otimes\Phi^*E_0^+) \to L^2(N, S^-\otimes\Phi^*E_0^+),\]
	\[	\D_-: H^1(N, S^-\otimes\Phi^*E_0^-) \to L^2(N, S^+\otimes\Phi^*E_0^-).\]
Then the twisted Dirac operator
	\begin{align*}
	\D^{E} = 
		\begin{pmatrix}
			0 & \D_-\\
			\D_+ & 0
		\end{pmatrix}
	\end{align*}
maps sections of $E$ to sections of $F$. From the proof of \cite[Proposition 2.2]{BBCH24}, we have
	\[	\ind(\D^{E}) = 2\deg(\Phi).\]
Up to switching $\D^{E}$ with its adjoint, we may assume that $\ind(\D^{E}) > 0$.  Hence we can find a non-trivial spinor field $s\in E$ such that $\D^Es=0$. For $\epsilon > 0$, denote by $\varphi_{\epsilon} = (|s|^2 + \epsilon)^{1/2}$. Taking $ f = \rho^{-1/2}\varphi_\epsilon$ into the assumption (\ref{spectral;Llarull;assumption}), we get

\begin{align}\label{Llarull;eqn1}
		0 \leq& 	\int_{N}\rho|D_{N}(\rho^{-1/2}\varphi_\epsilon)|^2  - \frac{1}{2}\int_{N}(n(n-1) - R_{N})\varphi_\epsilon^2 - \int_{N} \left(\Delta_{N}\log\rho + \frac{1}{2} |D_{N}\log\rho|^2
		 \right)\varphi_\epsilon^2. 
\end{align}
Combining Lemma \ref{Llarull;lem1} with (\ref{Llarull;eqn1}), we have

\begin{align}\label{Llarull;eqn2}
	0 \leq &  -\frac{1}{4(n+1)}\int_N |D_N\log\rho|^2\varphi_\epsilon^2 +  \frac{n}{n-1}\int_{N} |D_N\varphi_\epsilon|^2 - \frac{1}{4}\int_{N}(n(n-1) - R_{N})\varphi_\epsilon^2.
\end{align}

\begin{claim}[a refined Kato's inequality]\label{refined;Kato;ineq}
	We have
	\[	|D_N\varphi_\epsilon|^2 \leq \frac{n-1}{n}|\nabla^Es|^2.\]
\end{claim}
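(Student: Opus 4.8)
The plan is to establish the inequality pointwise. At any point $p$ with $s(p)=0$ one has $D_N\varphi_\epsilon(p)=0$: differentiating $\varphi_\epsilon^2=|s|^2+\epsilon$ gives $\varphi_\epsilon\,D_N\varphi_\epsilon=\textup{Re}\langle \nabla^E s,s\rangle$, which vanishes at $p$, so there the asserted bound is trivial. So fix a point $p$ with $s(p)\neq 0$; if $D_N|s|(p)=0$ the bound is again immediate, so assume $D_N|s|(p)\neq 0$. I would then choose an orthonormal basis $e_1,\dots,e_n$ of $T_pN$ with $e_1$ parallel to $D_N|s|(p)$. At $p$ this forces $\textup{Re}\langle \nabla^E_{e_i}s,s\rangle=0$ for $i\geq 2$, so writing $X:=\textup{Re}\langle \nabla^E_{e_1}s,s\rangle$ we get $\sum_{i}(\textup{Re}\langle \nabla^E_{e_i}s,s\rangle)^2=X^2$ at $p$; note also $|\nabla^E_{e_1}s|^2\geq X^2/|s|^2$ by Cauchy--Schwarz.

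Next I would use that $s$ lies in the kernel of the twisted Dirac operator, i.e. $\sum_{i=1}^n e_i\cdot\nabla^E_{e_i}s=0$. Clifford-multiplying this relation by $e_1$ and separating the $i=1$ term, which equals $-\nabla^E_{e_1}s$ since $e_1\cdot e_1=-1$ on spinors, yields $\nabla^E_{e_1}s=\sum_{i\geq 2}(e_1\cdot e_i)\cdot\nabla^E_{e_i}s$. Pairing with $s$, taking real parts, and using that Clifford multiplication by unit vectors preserves norms (so $|(e_1\cdot e_i)\cdot\nabla^E_{e_i}s|=|\nabla^E_{e_i}s|$), Cauchy--Schwarz over the $n-1$ indices $i\geq 2$ gives $X^2\leq (n-1)\,|s|^2\sum_{i\geq 2}|\nabla^E_{e_i}s|^2$. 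Since $\sum_{i\geq 2}|\nabla^E_{e_i}s|^2=|\nabla^E s|^2-|\nabla^E_{e_1}s|^2\leq |\nabla^E s|^2-X^2/|s|^2$, substituting and rearranging to absorb the term $(n-1)X^2$ produces the sharp bound $nX^2\leq (n-1)\,|s|^2|\nabla^E s|^2$.

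Finally I would translate this back to $\varphi_\epsilon$: in the chosen frame $|D_N\varphi_\epsilon|^2=(|s|^2+\epsilon)^{-1}\sum_i(\textup{Re}\langle \nabla^E_{e_i}s,s\rangle)^2=(|s|^2+\epsilon)^{-1}X^2\leq |s|^{-2}X^2\leq \tfrac{n-1}{n}|\nabla^E s|^2$, which is the claim. The one genuinely delicate point is the bookkeeping in the second step: one must use the frame adapted to $D_N|s|$ so that the loss incurred in Cauchy--Schwarz is \emph{exactly} the factor $\tfrac{n-1}{n}$ — a naive estimate produces a power of $(n-1)$ in the wrong place — and one must carry the leftover term $|\nabla^E_{e_1}s|^2$ and the regularization $\epsilon$ through so that the final constant is unaffected. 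Everything else is the standard refined Kato computation for harmonic twisted spinors.
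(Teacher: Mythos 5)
Your proof is correct and follows essentially the same route as the paper: an orthonormal frame adapted to the gradient of $\varphi_\epsilon$ (equivalently of $|s|$), the identity $\nabla^E_{e_1}s=\sum_{j\ge 2}e_1\cdot e_j\cdot\nabla^E_{e_j}s$ coming from $\D^E s=0$, Cauchy--Schwarz over the $n-1$ remaining directions to produce the factor $\tfrac{n-1}{n}$, and finally $\varphi_\epsilon\ge|s|$ to absorb the regularization. The only cosmetic difference is that you apply Cauchy--Schwarz after pairing with $s$ and keep the term $|\nabla^E_{e_1}s|^2\ge X^2/|s|^2$, whereas the paper bounds $|\nabla^E_{e_1}s|^2\le\tfrac{n-1}{n}|\nabla^E s|^2$ directly and then estimates $|D_{e_1}\varphi_\epsilon|$; both yield the same constant.
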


\begin{proof}[Proof of Claim~\ref{refined;Kato;ineq}]
	Fix a point $p\in N$. If $|s|(p) = 0$, the inequality is trivial. It suffices to consider $|s|(p)\neq 0$. Choose an orthonormal frame $\{e_1,\dots, e_n\}$ around $p$ such that $D_{e_1}\varphi_\epsilon = |D_N\varphi_\epsilon|$. Now, since $\D^Es = 0$, observe that
		\[	\nabla^E_{e_1}s = \sum_{j=2}^n e_1\cdot e_j\cdot\nabla^E_{e_j}s.\]
		Using Cauchy-Schwarz inequality,
		\begin{align*}
			|\nabla_{e_1}^Es|^2 &\leq (n-1)\sum_{j=2}^n|\nabla_{e_j}^E s|^2,
		\end{align*}
		which implies
		\[	|\nabla_{e_1}^Es|^2 \leq \frac{n-1}{n}|\nabla^Es|^2.\]
		Therefore,
		\begin{align*}
		|D_N\varphi_\epsilon|^2 = |D_{e_1}\varphi_\epsilon|^2 \leq \frac{|s|^2}{|s|^2+\epsilon}|\nabla_{e_1}^Es|^2 \leq 	\frac{|s|^2}{|s|^2+\epsilon}\cdot \frac{n-1}{n}|\nabla^Es|^2.
		\end{align*}
This proves the claim.
\end{proof}

Combining Claim~\ref{refined;Kato;ineq} with (\ref{Llarull;eqn2}), we thus obtain
\begin{align*}
		0 &\leq -\frac{1}{4(n+1)}\int_N |D_N\log\rho|^2|s|^2 + \int_N|\nabla^Es|^2 -\frac{1}{4}\int_{N}(n(n-1) - R_{N})|s|^2\\
		 &\quad -\frac{\epsilon}{4(n+1)}\int_N |D_N\log\rho|^2 -\frac{\epsilon}{4}\int_{N}(n(n-1) - R_{N}).
\end{align*}
Since this holds for all $\epsilon > 0$, taking limit $\epsilon\to 0$ we finally obtain
\begin{align}\label{Llarull;eqn3}
		0 \leq -\frac{1}{4(n+1)}\int_N |D_N\log\rho|^2|s|^2 + \int_N|\nabla^Es|^2 -\frac{1}{4}\int_{N}(n(n-1) - R_{N})|s|^2.
\end{align}
On the other hand, the Weitzenb\"ock identity gives

	\begin{align}\label{Llarull;eqn4}
	\int_{N}|\nabla^E s|^2 = \int_{N}|\D^Es|^2 - \frac{1}{4}\int_{N}R_{N}|s|^2 - \int_{N}\langle \R^Es,\, s\rangle.
	\end{align}
Since $\D^Es = 0$, upon combining (\ref{Llarull;eqn3}) with (\ref{Llarull;eqn4}), we get
\begin{align}\label{Llarull;eqn5}
	0 \leq -\frac{1}{4(n+1)}\int_N |D_N\log\rho|^2|s|^2 -\int_{N}\left( \frac{n(n-1)}{4}|s|^2 + \langle \R^Es,\, s\rangle\right).
\end{align}
By the calculation of Llarull \cite{Llarull} (also see \cite[Proposition A.1]{BBHW24}), we have
	\begin{align}\label{Llarull;eqn6}
		\langle \R^Es,\, s\rangle\geq -\frac{1}{4}\sum_{\substack{1\leq j,k\leq n\\j\neq k}}\mu_j\mu_k |s|^2,
	\end{align}
where $\mu_1,\dots,\mu_{n}\geq 0$ denote the singular values of the differential $d\Phi_x: (T_xN, g_x)\to (T_{\Phi(x)}S^{n}, g_{S^{n}})$. Putting (\ref{Llarull;eqn5}) and (\ref{Llarull;eqn6}) together,
\begin{align}\label{Llarull;eqn7}
	0 \leq -\frac{1}{4(n+1)}\int_N |D_N\log\rho|^2|s|^2 -\int_{N} \sum_{\substack{1\leq j,k\leq n\\j\neq k}}(1-\mu_j\mu_k)|s|^2.
\end{align}
We thus deduce from (\ref{Llarull;eqn7}) that $D_N\log\rho \equiv 0$ and $\mu_j \equiv 1$ on $N$. Consequently, $\Phi$ is an isometry.

\bigskip

\subsection{Proof of Theorem \ref{spectral;Llarull;thm} when $n$ is odd}
Following the modification made in \cite{Llarull}, we consider the product $\tilde{N} = N\times S^1_r$ equipped with product metric $g_{\tilde{N}} = g_N + r^2 g_{S^1}$, where $r>1$ is a large number . Consider
	\[	N\times S^1_r \xrightarrow{\Phi\times \frac{1}{r}\id} S^n\times S^1\xrightarrow{h} S^{n+1}, \]
	where $h$ is a 1-Lipschitz suspension map of degree one. The composition $\Phi_r = h\circ(\Phi\times \frac{1}{r}\id): \tilde{N}\to S^{n+1}$ is 1-Lipschitz and has $\deg(\Phi_r)\neq 0$.
	
	Choose a spin structure on $\tilde{N}$, and let $\tilde{S}$ be the spinor bundle over $\tilde{N}$. Let $E_0$ be the spinor bundle over the round $S^{n+1}$.  Since $n+1$ is even,  we have the splittings $\tilde{S} = \tilde{S}^+\oplus \tilde{S}^-$ and  $E_0 = E_0^+\oplus E_0^-$. We consider the twisted bundles
	\[	\tilde{E} = (\tilde{S}^+\otimes\Phi^*E_0^+)\oplus (\tilde{S}^-\otimes\Phi^*E_0^-),\]
	\[	\tilde{F} = (\tilde{S}^+\otimes\Phi^*E_0^-)\oplus (\tilde{S}^-\otimes\Phi^*E_0^+).\]
We then consider the twisted Dirac operators
	\[	\tilde{\D}_+: H^1(N, \tilde{S}^+\otimes\Phi^*E_0^+) \to L^2(N, \tilde{S}^-\otimes\Phi^*E_0^+),\]
	\[	\tilde{\D}_-: H^1(N, \tilde{S}^-\otimes\Phi^*E_0^-) \to L^2(N, \tilde{S}^+\otimes\Phi^*E_0^-).\]
Then the twisted Dirac operator
	\begin{align*}
	\D^{\tilde{E}} = 
		\begin{pmatrix}
			0 & \tilde{\D}_-\\
			\tilde{\D}_+ & 0
		\end{pmatrix}
	\end{align*}
maps sections of $\tilde{E}$ to sections of $\tilde{F}$. From the proof of \cite[Proposition 2.2]{BBCH24}, we have
	\[	\ind(\D^{\tilde{E}}) = 2\deg(\Phi_r).\]
Up to switching $\D^{\tilde{E}}$ with its adjoint, we may assume that $\ind(\D^{\tilde{E}}) > 0$.  Hence we can find a non-trivial spinor field $s\in \tilde{E}$ such that $\D^{\tilde{E}}s=0$.

Next, choose an orthonormal frame $\{e_1,\dots, e_n, e_{n+1}\}$ around $(x,t)\in \tilde{N}= N\times S^1_r$ such that $\{e_1,\dots, e_n\}$ is an orthonormal frame around $x\in N$, and an orthonormal frame $\{f_1,\dots, f_{n+1}\}$ around $\Phi_r(x,t)\in S^{n+1}$. Let $\tilde{\mu}_1,\dots,\tilde{\mu}_{n+1}\geq 0$ the singular values of the differential $d(\Phi_r)_{(x,t)}: (T_{(x,t)}\tilde{N}, \tilde{g}_{(x,t)})\to (T_{\Phi_r(x,t)}S^{n+1}, g_{S^{n+1}})$  so that $d\Phi_r(e_k) = \tilde{\mu}_k f_k$.  The calculation in \cite[page 68]{Llarull} gives
\begin{align}\label{Llarull;eqn8}
			\langle \R^Es,\, s\rangle \geq -\frac{1}{4}\sum_{\substack{1\leq j,k\leq n\\j\neq k}}\tilde{\mu}_j\tilde{\mu}_k |s|^2 - \frac{n}{2r}|s|^2.
\end{align}
Repeating the same argument as in the even dimensional case, we thus obtain
\begin{align}\label{Llarull;eqn9}
	0 \leq -\frac{1}{4(n+1)}\int_N |D_N\log\rho|^2|s|^2 -\int_{\tilde{N}} \sum_{\substack{1\leq j,k\leq n\\j\neq k}}(1-\tilde{\mu}_j\tilde{\mu}_k)|s|^2 - \int_{\tilde{N}} \frac{2n}{r}|s|^2.
\end{align}
This implies $D_N\log\rho = 0$. Now, if $\tilde{\mu}_j < 1$ for some $j\in\{1,\dots,n\}$ at some point $p\in N$, we can choose $r$ sufficiently large such that (\ref{Llarull;eqn9}) is violated. This implies $\tilde{\mu}_j \equiv 1$. Next, since $d\Phi_r = dh \circ (d\Phi\oplus \frac{1}{r}\id_{TS^1})$ and $|\wedge^2 d\Phi|\leq 1$, we have
\begin{align*}
	1 = \tilde{\mu}_j\tilde{\mu}_k \leq \|dh\||d\Phi(e_j)\wedge d\Phi(e_k)|\leq |d\Phi(e_j)\wedge d\Phi(e_k)|\leq 1.
\end{align*}
Consequently $\Phi$ is an isometry.
\bigskip

\section{Foliations arising from the equality case}\label{foliation;section}

In this section we analyze what happens when the slice $\Sigma_k$ satisfies the equality $R_{\Sigma_k} = (n-m)(n-m-1)$. This would ultimately lead to local isometry. We will adapt the idea of \cite{CKL24, Zhu20}, to show that if $R_{\Sigma_k} = (n-m)(n-m-1)$, then the submanifold $\Sigma_k$ is totally geodesic in $\Sigma_{k-1}$ and admits a local foliation $\{\Sigma_{k,t}\}_{t\in (-\epsilon, \epsilon)}$, and each $\Sigma_{k,t}$ is also a minimizer of the weighted area functional in $\Sigma_{k-1}$.

\begin{lemma}\label{induction;lem}
	Let $k\in \{1,\dots,m\}$. Suppose that $\Sigma_k$ is a minimizer of the weighted area 
		\[	\mathcal{H}_{\rho_{k-1}}^{n-k}(\Sigma) = \int_{\Sigma}\rho_{k-1}\, d\mu\]
	 satisfying $R_{\Sigma_k} = (n-m)(n-m-1)$ and $D_{\Sigma_k}\log\rho_k = 0$ on $\Sigma_k$, then the followings hold:
	\begin{enumerate}[(i)]
		\item $A_{\Sigma_k} = 0$, so that $\Sigma_k$ is a totally geodesic hypersurface in $\Sigma_{k-1}$.
		\item $D_{\Sigma_{k-1}}\log\rho_{k-1} = 0$ on $\Sigma_k$.
		\item $R_{\Sigma_{k-1}} - 2\Delta_{\Sigma_{k-1}}\log\rho_{k-1} - |D_{\Sigma_{k-1}}\log\rho_{k-1}|^2 = (n-m)(n-m-1)$ on $\Sigma_k$.
	\end{enumerate}
\end{lemma}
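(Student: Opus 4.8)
The plan is to push the equality hypotheses through the pointwise differential inequality recorded in Corollary~\ref{slicing;cor1} and then extract the full strength of the resulting equality; all three conclusions will then fall out by substitution. Recall that under the standing hypotheses $\rho_0=e^{\psi}$ obeys $\Delta_{M}\log\rho_0+\tfrac12|D_{M}\log\rho_0|^2\le\tfrac12\big(R_M-(n-m)(n-m-1)\big)$. First I would observe that, since $\Sigma_k$ is connected and $D_{\Sigma_k}\log\rho_k\equiv 0$, the function $\rho_k$ is constant on $\Sigma_k$, hence also $\Delta_{\Sigma_k}\log\rho_k=0$. Feeding the $\psi$-inequality into Corollary~\ref{slicing;cor1} and using $R_{\Sigma_k}=(n-m)(n-m-1)$, the entire right-hand side collapses to
\begin{align*}
0=\Delta_{\Sigma_k}\log\rho_k+\tfrac12|D_{\Sigma_k}\log\rho_k|^2\le-\tfrac12\sum_{j=1}^{k}|D_{\Sigma_j}\log u_j|^2-\tfrac12\sum_{j=1}^{k}|A_{\Sigma_j}|^2\le 0 .
\end{align*}
I would then read off that, pointwise on $\Sigma_k$, every summand vanishes --- so $A_{\Sigma_j}\equiv 0$ and $D_{\Sigma_j}\log u_j\equiv 0$ for $1\le j\le k$ --- and, since Corollary~\ref{slicing;cor1} is obtained by telescoping the level-by-level estimates of Lemma~\ref{slicing;lem2} together with the $\psi$-inequality (all pointing the same way), each of those estimates must itself hold with equality on $\Sigma_k$; in particular the level-$k$ instance of Lemma~\ref{slicing;lem2} is an equality on $\Sigma_k$.

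Conclusion (i) is then immediate: the $j=k$ term gives $A_{\Sigma_k}\equiv 0$, so $\Sigma_k$ is totally geodesic in $\Sigma_{k-1}$ and, in particular, $H_{\Sigma_k}\equiv 0$. For (ii) I would split $D_{\Sigma_{k-1}}\log\rho_{k-1}$ along $\Sigma_k$ into tangential and normal parts. The tangential part is $D_{\Sigma_k}(\log\rho_{k-1}|_{\Sigma_k})$, and from $\rho_k=\rho_{k-1}|_{\Sigma_k}\cdot u_k$ one gets $D_{\Sigma_k}\log\rho_{k-1}=D_{\Sigma_k}\log\rho_k-D_{\Sigma_k}\log u_k=0$. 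The normal part is governed by the critical-point equation $H_{\Sigma_k}=-\langle D_{\Sigma_{k-1}}\log\rho_{k-1},\,\nu_{\Sigma_k}\rangle$ used in the proof of Lemma~\ref{slicing;lem1}, which with $H_{\Sigma_k}=0$ forces $\langle D_{\Sigma_{k-1}}\log\rho_{k-1},\,\nu_{\Sigma_k}\rangle=0$. Hence $D_{\Sigma_{k-1}}\log\rho_{k-1}=0$ on $\Sigma_k$.

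Finally, for (iii) I would invoke the equality in the level-$k$ instance of Lemma~\ref{slicing;lem2}: substituting $\Delta_{\Sigma_k}\log\rho_k+\tfrac12|D_{\Sigma_k}\log\rho_k|^2=0$, $|A_{\Sigma_k}|^2=0$, $|D_{\Sigma_k}\log u_k|^2=0$ and $R_{\Sigma_k}=(n-m)(n-m-1)$ into that identity and rearranging gives
\begin{align*}
R_{\Sigma_{k-1}}-2\Delta_{\Sigma_{k-1}}\log\rho_{k-1}-|D_{\Sigma_{k-1}}\log\rho_{k-1}|^2=(n-m)(n-m-1)\qquad\text{on }\Sigma_k ,
\end{align*}
the gradient term being redundant in view of (ii).

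The argument is essentially a chain of substitutions, so there is no serious obstacle; the points that need a little care are that $D_{\Sigma_k}\log\rho_k\equiv 0$ on a \emph{connected} $\Sigma_k$ also kills $\Delta_{\Sigma_k}\log\rho_k$, that equality in the telescoped inequality of Corollary~\ref{slicing;cor1} genuinely forces equality at every intermediate level of Lemma~\ref{slicing;lem2} (and vanishing of the $\psi$-defect), and the tangential/normal bookkeeping of restricted gradients in step (ii).
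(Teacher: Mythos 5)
Your argument is correct. Feeding the $\psi$-assumption and $R_{\Sigma_k}=(n-m)(n-m-1)$ into the pointwise inequality of Corollary~\ref{slicing;cor1}, with the left-hand side killed by the hypothesis $D_{\Sigma_k}\log\rho_k\equiv 0$, does squeeze the telescoped chain to equality at every point of $\Sigma_k$; the resulting vanishing of $|A_{\Sigma_j}|$ and $|D_{\Sigma_j}\log u_j|$ together with equality in the level-$k$ instance of Lemma~\ref{slicing;lem2} then gives (i)--(iii), and your treatment of (ii) via the tangential/normal splitting and the first-variation identity $H_{\Sigma_k}=-\langle D_{\Sigma_{k-1}}\log\rho_{k-1},\,\nu_{\Sigma_k}\rangle$ matches the paper's. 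The route differs from the paper's mainly in part (i): the paper does not use the hypothesis $D_{\Sigma_k}\log\rho_k=0$ there at all; it inserts the test function $f=\rho_{k-1}^{-1/2}$ into the integrated stability inequality (\ref{slicing;eqn3})/(\ref{slicing;eqn4}), integrates the Laplacian term away over the closed slice, and arrives at $0\le -\tfrac14\int_{\Sigma_k}|D_{\Sigma_k}\log\rho_{k-1}|^2-\tfrac12\int_{\Sigma_k}|A_{\Sigma_k}|^2$, so that $A_{\Sigma_k}=0$ and the tangential vanishing $D_{\Sigma_k}\log\rho_{k-1}=0$ come out simultaneously and independently of the $\rho_k$-hypothesis, which is invoked only for (iii); there the paper runs a two-sided pointwise squeeze (Corollary~\ref{slicing;cor1} at level $k-1$ from below, Lemma~\ref{slicing;lem2} at level $k$ from above), which is the same equality-propagation you carry out through the telescoped chain. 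What your version buys is a purely pointwise argument with no integration by parts, plus the extra byproduct $A_{\Sigma_j}=0$ and $D_{\Sigma_j}\log u_j=0$ on $\Sigma_k$ for all $j\le k$; what the paper's buys is that conclusion (i), and the tangential constancy of $\rho_{k-1}$ along $\Sigma_k$, hold without assuming $D_{\Sigma_k}\log\rho_k=0$. Both are valid under the lemma's stated hypotheses.
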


\begin{proof}
	Firstly, taking $f = \rho_{k-1}^{-1/2}$ in the stability inequality (\ref{slicing;eqn3}), we have
	
	\begin{align*}
		0 \leq& 	\int_{\Sigma_k}\rho_{k-1}|D_{\Sigma_k}\rho_{k-1}^{-1/2}|^2 - \frac{1}{2}\int_{\Sigma_k} |A_{\Sigma_k}|^2 - \int_{\Sigma_k} \left(\Delta_{\Sigma_{k}}\log\rho_{k-1} + \frac{1}{2} |D_{\Sigma_{k}}\log\rho_{k-1}|^2\right)  \\
		 = & -\frac{1}{4}\int_{\Sigma_k}|D_{\Sigma_k}\log\rho_{k-1}|^2 - \frac{1}{2}\int_{\Sigma_k} |A_{\Sigma_k}|^2,
	\end{align*}
	which gives $D_{\Sigma_k}\log\rho_{k-1} = 0$ and $A_{\Sigma_k} = 0$ on $\Sigma_k$. This proves (i). Then (ii) follows from $	D_{\Sigma_k}\log\rho_{k-1} =  0$ and
	\[ \langle D_{\Sigma_{k-1}}\log\rho_{k-1},\, \nu_{\Sigma_k}\rangle = H_{\Sigma_k} + \langle D_{\Sigma_{k-1}}\log\rho_{k-1},\, \nu_{\Sigma_k}\rangle = 0.\]
Next, to prove (iii), we note that Corollary \ref{slicing;cor1} gives
\begin{align}\label{induction;eqn1}
	& R_{\Sigma_{k-1}} - 2\Delta_{\Sigma_{k-1}}\log\rho_{k-1} - |D_{\Sigma_{k-1}}\log\rho_{k-1}|^2\\
	 \notag\geq\, & R_M - 2\Delta_{M}\psi - |D_{\Sigma_{M}}\psi|^2\\
	 \notag\geq\, & (n-m)(n-m-1)	
\end{align}
at every point on $\Sigma_k$. On the other hand, Lemma \ref{slicing;lem2} and the assumption give
\begin{align}\label{induction;eqn2}
		&R_{\Sigma_{k-1}} - 2\Delta_{\Sigma_{k-1}}\log\rho_{k-1} - |D_{\Sigma_{k-1}}\log\rho_{k-1}|^2 - (n-m)(n-m-1)\\
		\notag=\, &  R_{\Sigma_{k-1}}  - R_{\Sigma_k} - 2\Delta_{\Sigma_{k-1}}\log\rho_{k-1} - |D_{\Sigma_{k-1}}\log\rho_{k-1}|^2\\
		\notag\leq\, & - 2\Delta_{\Sigma_k}\log\rho_k -  |D_{\Sigma_{k}}\log\rho_{k}|^2 -  |D_{\Sigma_{k}}\log u_{k}|^2\\
		\notag\leq\, &  0
\end{align}
at every point on $\Sigma_k$. Putting (\ref{induction;eqn1}) and (\ref{induction;eqn2}) together, we thus obtain (iii).

\end{proof}

\begin{lemma}\label{foliation;lem}
	Let $k\in \{1,\dots, m\}$. If $R_{\Sigma_k} = (n-m)(n-m-1)$, then there exists a local foliation $\{\Sigma_{k,t}\}_{t\in (-\epsilon, \epsilon)}$ of $\Sigma_k$ in $\Sigma_{k-1}$ such that each $\Sigma_{k,t}$ is given by the graph over $\Sigma_k$ with graph function $w_t$ along the unit normal $\nu_{\Sigma_k}$ such that
	\begin{align*}
		\Sigma_{k,0} = \Sigma_k,\quad \frac{\partial}{\partial t}\Big|_{t=0} w_t = 1,\quad \frac{\partial w_t}{\partial t} > 0,\quad \fint_{\Sigma_k} w_t d\mu = t
	\end{align*}
	and $H_{\Sigma_{k,t}} +\langle D_{\Sigma_{k-1}}\log\rho_{k-1},\, \nu_{\Sigma_{k,t}}\rangle $ is constant on $\Sigma_{k,t}$, where $\nu_{\Sigma_{k,t}}$ is unit normal on $\Sigma_{k,t}$.
\end{lemma}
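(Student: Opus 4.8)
The plan is to construct the leaves $\Sigma_{k,t}$ by a constrained implicit function theorem, viewing them as normal graphs over $\Sigma_k$ inside $\Sigma_{k-1}$ whose weighted mean curvature is forced to be constant. Fix $\alpha\in(0,1)$. For small $w\in C^{2,\alpha}(\Sigma_k)$ let $\Sigma_w\subset\Sigma_{k-1}$ be the hypersurface obtained by moving each $x\in\Sigma_k$ along the geodesic in $\Sigma_{k-1}$ with initial velocity $w(x)\,\nu_{\Sigma_k}(x)$; identify $\Sigma_w$ with $\Sigma_k$ via the base point, let $\nu_w$ be its unit normal chosen continuously with $\nu_0=\nu_{\Sigma_k}$, and set
\[
\mathcal{H}(w):=H_{\Sigma_w}+\big\langle D_{\Sigma_{k-1}}\log\rho_{k-1},\,\nu_w\big\rangle\ \in\ C^{0,\alpha}(\Sigma_k),
\]
the weighted mean curvature of $\Sigma_w$ pulled back to $\Sigma_k$. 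Since $\Sigma_k$ is a critical point of the $\rho_{k-1}$-weighted area, the first variation formula gives $\mathcal{H}(0)\equiv0$. We want, for $t$ near $0$, a graph function $w_t$ with $\mathcal{H}(w_t)$ constant on $\Sigma_k$ and $\fint_{\Sigma_k}w_t\,d\mu=t$.

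The engine is the linearization of $\mathcal{H}$ at $w=0$. A standard computation (read off from the second variation formula in the proof of Lemma~\ref{slicing;lem1}) identifies $D\mathcal{H}(0)$, up to sign, with the $\rho_{k-1}$-weighted Jacobi operator
\[
L_{\rho_{k-1}}\varphi=\Delta_{\Sigma_k}\varphi+\langle D_{\Sigma_k}\log\rho_{k-1},D_{\Sigma_k}\varphi\rangle+\big(|A_{\Sigma_k}|^2+\Ric_{\Sigma_{k-1}}(\nu_{\Sigma_k},\nu_{\Sigma_k})-(D_{\Sigma_{k-1}}^2\log\rho_{k-1})(\nu_{\Sigma_k},\nu_{\Sigma_k})\big)\varphi.
\]
Now invoke Lemma~\ref{induction;lem}: in the equality case $A_{\Sigma_k}=0$ and $D_{\Sigma_k}\log\rho_{k-1}=0$ on $\Sigma_k$ (so $\rho_{k-1}|_{\Sigma_k}$ is constant and $\Delta_{\Sigma_k}\log\rho_{k-1}=0$), while $D_{\Sigma_{k-1}}\log\rho_{k-1}=0$ on $\Sigma_k$ by part (ii). Feeding these into the Gauss equation and the Hessian trace identity from the proof of Lemma~\ref{slicing;lem1} gives $\Ric_{\Sigma_{k-1}}(\nu_{\Sigma_k},\nu_{\Sigma_k})=\tfrac12(R_{\Sigma_{k-1}}-R_{\Sigma_k})$ and $(D_{\Sigma_{k-1}}^2\log\rho_{k-1})(\nu_{\Sigma_k},\nu_{\Sigma_k})=\Delta_{\Sigma_{k-1}}\log\rho_{k-1}$ on $\Sigma_k$, and Lemma~\ref{induction;lem}(iii) gives $R_{\Sigma_{k-1}}-R_{\Sigma_k}=2\Delta_{\Sigma_{k-1}}\log\rho_{k-1}$ there. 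Hence the zeroth- and first-order terms all cancel and $L_{\rho_{k-1}}=\Delta_{\Sigma_k}$ on $\Sigma_k$. In particular $L_{\rho_{k-1}}1=0$, and $L_{\rho_{k-1}}$ restricts to a Banach-space isomorphism from $\{\varphi\in C^{2,\alpha}(\Sigma_k):\fint\varphi\,d\mu=0\}$ onto $\{h\in C^{0,\alpha}(\Sigma_k):\fint h\,d\mu=0\}$.

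With this in hand, apply the implicit function theorem to $F(t,v)=\mathcal{H}(t+v)-\fint_{\Sigma_k}\mathcal{H}(t+v)\,d\mu$, defined on $(-\delta,\delta)\times\{v\in C^{2,\alpha}(\Sigma_k):\fint v\,d\mu=0\}$ with values in $\{h\in C^{0,\alpha}(\Sigma_k):\fint h\,d\mu=0\}$. Then $F(0,0)=0$, and $D_vF(0,0)=\Delta_{\Sigma_k}$ is an isomorphism by the previous paragraph, so there is a smooth family $v(t)$ with $v(0)=0$ and $F(t,v(t))\equiv0$, with $v(t)$ smooth on $\Sigma_k$ by elliptic regularity. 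Put $w_t:=t+v(t)$ and $\Sigma_{k,t}:=\Sigma_{w_t}$; then $\Sigma_{k,0}=\Sigma_k$, the quantity $H_{\Sigma_{k,t}}+\langle D_{\Sigma_{k-1}}\log\rho_{k-1},\nu_{\Sigma_{k,t}}\rangle$ is constant on $\Sigma_{k,t}$, and $\fint_{\Sigma_k}w_t\,d\mu=t+\fint_{\Sigma_k}v(t)\,d\mu=t$. Differentiating $F(t,v(t))=0$ at $t=0$ and using $\partial_tF(0,0)=L_{\rho_{k-1}}1-\fint_{\Sigma_k}L_{\rho_{k-1}}1\,d\mu=0$ forces $D_vF(0,0)[v'(0)]=0$, hence $v'(0)=0$ and $\partial_t|_{t=0}w_t=1$. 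Since $t\mapsto\partial_tw_t\in C^0(\Sigma_k)$ is continuous and equals $1$ at $t=0$, after shrinking $\delta$ to some $\epsilon$ we get $\partial_tw_t>0$ on $\Sigma_k$ for all $t\in(-\epsilon,\epsilon)$; strict monotonicity of the graph functions then makes $\{\Sigma_{k,t}\}_{t\in(-\epsilon,\epsilon)}$ a foliation of a tubular neighborhood of $\Sigma_k$ in $\Sigma_{k-1}$.

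The main obstacle is the linearization identity $L_{\rho_{k-1}}=\Delta_{\Sigma_k}$ on $\Sigma_k$: everything downstream is the routine implicit-function/foliation machinery (as in \cite{CKL24,Zhu20}), but it rests on the exact cancellation of the Jacobi potential, which requires combining the Gauss equation, the Hessian trace identity, and all three conclusions of Lemma~\ref{induction;lem}. A subsidiary point is that $L_{\rho_{k-1}}1=0$ is what simultaneously gives the invertibility of $D_vF(0,0)$ on mean-zero functions and the normalization $v'(0)=0$ needed for $\partial_t|_{t=0}w_t=1$; and the constancy of $\rho_{k-1}|_{\Sigma_k}$ (from Lemma~\ref{induction;lem}) is what lets one use the unweighted average in the constraint $\fint_{\Sigma_k}w_t\,d\mu=t$ while still exploiting the self-adjointness of $L_{\rho_{k-1}}$.
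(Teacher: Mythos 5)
Your proposal is correct and follows essentially the same route as the paper: both linearize the weighted mean curvature operator at $\Sigma_k$, use Lemma~\ref{induction;lem} (together with the Gauss equation and the Hessian identity and $R_{\Sigma_k}=(n-m)(n-m-1)$) to see that the Jacobi potential and drift terms cancel so the linearization is $-\Delta_{\Sigma_k}$, and then solve for the family $w_t$ by an inverse/implicit function theorem with the mean-value constraint $\fint w_t\,d\mu=t$. The only cosmetic difference is that the paper inverts the single map $\Psi(f)=(\tilde H_f-\fint\tilde H_f,\ \fint f)$ while you split $w_t=t+v(t)$ with $v$ mean-zero; these are the same argument.
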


\begin{proof}
	Denote by $\mathring{C}^{\alpha}(\Sigma_k)$ the space of functions $f\in C^{\alpha}(\Sigma_k)$ with $\int_{\Sigma_k}f d\mu = 0$. For $f\in C^{2,\alpha}(\Sigma_k)$, denote by $\Sigma_f$ the graph of $f$ over $\Sigma_k$, and $\nu_{\Sigma_f}$ the unit normal of $\Sigma_f$. Moreover, denote $\tilde{H}_f = H_{\Sigma_f}  +\langle D_{\Sigma_{k-1}}\log\rho_{k-1},\, \nu_{\Sigma_{f}}\rangle$. We consider the map $\Psi:C^{2,\alpha}(\Sigma_k)\to \mathring{C}^{\alpha}(\Sigma_k)\times \mathbb{R}$ defined by
	
	\[	\Psi(f) = \left(\tilde{H}_f - \fint_{\Sigma_k}\tilde{H}_f\, d\mu,\,  \fint_{\Sigma_k} f\, d\mu \right).\]
	Using Lemma \ref{induction;lem} and the second variation formula, we compute
	
	\begin{align*}
		& \frac{d}{ds}\Big|_{s=0}\tilde{H}_{s\eta} \\
		=\, & -\Delta_{\Sigma_k}\eta - (|A_{\Sigma_k}|^2 + \Ric_{\Sigma_{k-1}}(\nu_{\Sigma_k},\nu_{\Sigma_k})) \eta + (D_{\Sigma_{k-1}}^2 \log\rho_{k-1})(\nu_{\Sigma_k}, \nu_{\Sigma_k}) \eta\\
		=\, & -\Delta_{\Sigma_k}\eta -\frac{1}{2}\left(R_{\Sigma_{k-1}} - R_{\Sigma_k}  -2\Delta_{\Sigma_{k-1}}\log\rho_{k-1} -  |D_{\Sigma_{k-1}}\log\rho_{k-1}|^2
		 \right)\eta\\
		=\, & -\Delta_{\Sigma_k}\eta.
	\end{align*}
This gives

	\begin{align*}
		 D\Psi|_{f=0}(\eta) =  \left(  -\Delta_{\Sigma_k}\eta,\, \fint_{\Sigma_k} \eta\, d\mu\right)\in \mathring{C}^{\alpha}(\Sigma_k)\times \mathbb{R}. 
	\end{align*}
Hence the linearized operator of $\Psi$ at $f=0$ is invertible. Applying the inverse function theorem, we thus find a family of functions $w_t:\Sigma_k\to\mathbb{R}$ for $t\in (-\epsilon, \epsilon)$ such that
	\[	\Psi(w_t) = (0, t),\ w_0 = 0,\ \frac{\partial}{\partial t}\Big|_{t=0} w_t = 1,\ \text{and}\, \fint_{\Sigma_k}w_t\, d\mu = t. \]
Moreover, we can make $\frac{\partial w_t}{\partial t} > 0$ everywhere by taking $\epsilon$ to be sufficiently small. Next,  we denote by $\Sigma_{k,t}$ the graph of $w_t$ over $\Sigma_k$, thus $H_{\Sigma_{k,t}} +\langle D_{\Sigma_{k-1}}\log\rho_{k-1},\, \nu_{\Sigma_{k,t}}\rangle $ is constant on $\Sigma_{k,t}$.
\end{proof}

\bigskip

Next, we show that each slice $\Sigma_{k,t}$ given in Lemma \ref{foliation;lem} are all minimizers of the weighted area functional. To achieve our goal, we adapt the arguments in \cite[Proposition 3.4]{Zhu20} and \cite[Proposition 3.1]{CKL24} to our case.

\begin{proposition}\label{induction;isometry;prop1}
	Let $k\in \{1,\dots, m\}$. If $R_{\Sigma_k} = (n-m)(n-m-1)$ and $\Sigma_{k,t}$ is a slice given in Lemma \ref{foliation;lem}, we have 
		\[	\mathcal{H}_{\rho_{k-1}}^{n-k}(\Sigma_{k,t}) = \mathcal{H}_{\rho_{k-1}}^{n-k}(\Sigma_{k}) = \int_{\Sigma_k}\rho_{k-1}\, d\mu.\]
\end{proposition}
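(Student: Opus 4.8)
The plan is to show that the weighted area $t\mapsto \mathcal{H}_{\rho_{k-1}}^{n-k}(\Sigma_{k,t})$ is both $\ge$ and $\le \mathcal{H}_{\rho_{k-1}}^{n-k}(\Sigma_k)$ near $t=0$, hence constant. The lower bound is immediate from stability: by Lemma~\ref{induction;lem}, $\Sigma_k$ is totally geodesic with $D_{\Sigma_k}\log\rho_{k-1}=0$, and it is a minimizer of the $\rho_{k-1}$-weighted area in its own right (indeed $\Sigma_k$ is assumed to be a stable critical point; combined with the equality forcing $A_{\Sigma_k}=0$ one sees the stability operator is just $-\Delta_{\Sigma_k}$, which has $0$ as its bottom eigenvalue with constant eigenfunction, so $\Sigma_k$ is weighted-area-minimizing to second order and — using the band/slicing construction of \cite{BHJ23} which produces genuine minimizers — actually area-minimizing). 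Thus $\mathcal{H}_{\rho_{k-1}}^{n-k}(\Sigma_{k,t})\ge \mathcal{H}_{\rho_{k-1}}^{n-k}(\Sigma_k)$ for all small $t$.

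For the reverse inequality I would compute the first and second derivatives of $\mathcal{A}(t):=\mathcal{H}_{\rho_{k-1}}^{n-k}(\Sigma_{k,t})$ at $t=0$. Write $X=\partial_t|_{t=0}$ for the variation field, which has normal component $w:=\partial_t|_{t=0}w_t$ satisfying $\fint_{\Sigma_k}w\,d\mu=1$ (normalized so $w\equiv 1$ since by Lemma~\ref{foliation;lem} the foliation is by constant weighted-mean-curvature graphs and the linearized operator was $-\Delta_{\Sigma_k}$, whose constant-constraint solution is $w=1$). The first variation of weighted area is $\mathcal{A}'(0)=\int_{\Sigma_k}\big(H_{\Sigma_k}+\langle D_{\Sigma_{k-1}}\log\rho_{k-1},\nu_{\Sigma_k}\rangle\big)\rho_{k-1}w\,d\mu=0$ since $\Sigma_k$ is a weighted critical point. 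The second variation, using the standard Jacobi-operator formula for weighted area as in \cite{BHJ23}, is
\begin{align*}
\mathcal{A}''(0) = \int_{\Sigma_k}\Big(|D_{\Sigma_k}w|^2 - \big(|A_{\Sigma_k}|^2+\Ric_{\Sigma_{k-1}}(\nu_{\Sigma_k},\nu_{\Sigma_k})\big)w^2 + (D_{\Sigma_{k-1}}^2\log\rho_{k-1})(\nu_{\Sigma_k},\nu_{\Sigma_k})w^2\Big)\rho_{k-1}\,d\mu.
\end{align*}
With $w\equiv 1$, $A_{\Sigma_k}=0$, $D_{\Sigma_k}\log\rho_{k-1}=0$, and the Lemma~\ref{induction;lem}(iii) identity $R_{\Sigma_{k-1}}-2\Delta_{\Sigma_{k-1}}\log\rho_{k-1}-|D_{\Sigma_{k-1}}\log\rho_{k-1}|^2=(n-m)(n-m-1)$, the same algebra used to prove Lemma~\ref{induction;lem}(iii) (i.e. the Gauss-equation rewriting of $|A_{\Sigma_k}|^2+\Ric_{\Sigma_{k-1}}(\nu,\nu)$ minus the Hessian-of-$\log\rho_{k-1}$ term) collapses the integrand to $\tfrac12(R_{\Sigma_k}-(n-m)(n-m-1))\rho_{k-1}=0$. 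Hence $\mathcal{A}''(0)=0$.

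Knowing $\mathcal{A}'(0)=\mathcal{A}''(0)=0$ together with the lower bound $\mathcal{A}(t)\ge\mathcal{A}(0)$ is not yet enough to conclude $\mathcal{A}\equiv\mathcal{A}(0)$, so the real work — and the step I expect to be the main obstacle — is to upgrade this to all small $t$, not just infinitesimally. I would do this by running the same computation at an arbitrary leaf $\Sigma_{k,t}$ rather than only at $t=0$: the point is that the foliation was constructed so that each $\Sigma_{k,t}$ has constant weighted mean curvature $\tilde H_t:=H_{\Sigma_{k,t}}+\langle D_{\Sigma_{k-1}}\log\rho_{k-1},\nu_{\Sigma_{k,t}}\rangle$, and $\mathcal{A}'(t)=\int_{\Sigma_{k,t}}\tilde H_t\,\rho_{k-1}\langle X,\nu\rangle\,d\mu = \tilde H_t\cdot\tfrac{d}{dt}\big(\text{weighted volume enclosed}\big)$, with the monotone bracketed quantity increasing because $\partial_t w_t>0$. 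Combining $\mathcal{A}(t)\ge\mathcal{A}(0)$, $\mathcal{A}(0)\le\mathcal{A}(t)$ on both sides $t\gtrless 0$, $\mathcal{A}'(0)=0$, and the sign structure of $\mathcal{A}'(t)=\tilde H_t\cdot(\text{positive})$ forces $\tilde H_t\equiv 0$ (if $\tilde H_t>0$ on an interval then $\mathcal{A}$ strictly increases there, contradicting minimality of $\Sigma_k$ once one also goes to the other side; the argument of \cite[Proposition 3.4]{Zhu20} and \cite[Proposition 3.1]{CKL24} is precisely designed to close this). Then each $\Sigma_{k,t}$ is a weighted minimal hypersurface, so $\mathcal{A}'(t)=0$ for all $t$, giving $\mathcal{A}(t)\equiv\mathcal{A}(0)=\int_{\Sigma_k}\rho_{k-1}\,d\mu$; and since each $\Sigma_{k,t}$ has the same weighted area as the minimizer $\Sigma_k$, each is itself a minimizer. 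The delicate point to get right is the monotonicity/sign bookkeeping for the enclosed weighted volume and the fact that the comparison surfaces $\Sigma_{k,t}$ are admissible competitors in the minimization problem $\Sigma_k$ solves (they are the graphs produced in Lemma~\ref{foliation;lem}, hence homologous to $\Sigma_k$ in $\Sigma_{k-1}$), exactly as in the cited references.
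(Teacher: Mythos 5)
Your reduction of the statement to the sign of the weighted mean curvature $\tilde H_t=H_{\Sigma_{k,t}}+\langle D_{\Sigma_{k-1}}\log\rho_{k-1},\nu_{\Sigma_{k,t}}\rangle$ along the foliation is exactly the paper's first step (minimality of $\Sigma_k$ plus the first variation formula sandwiches $\mathcal{A}(t)-\mathcal{A}(0)$ between $0$ and an integral of $\tilde H_s$ times a positive lapse), and your computation that $\mathcal{A}'(0)=\mathcal{A}''(0)=0$ is correct but not needed. The genuine gap is the final step: minimality $\mathcal{A}(t)\ge\mathcal{A}(0)$, $\mathcal{A}'(0)=0$, and $\mathcal{A}'(t)=\tilde H_t\cdot(\text{positive})$ do \emph{not} force $\tilde H_t\equiv 0$. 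Nothing in that bookkeeping excludes the scenario in which $t=0$ is a strict minimum of $\mathcal{A}$ of higher order (say $\mathcal{A}(t)-\mathcal{A}(0)\sim t^4$), with $\tilde H_t>0$ for $t>0$ and $\tilde H_t<0$ for $t<0$; this is fully consistent with every identity you invoke, because all of your curvature/spectral input is used only at the central leaf $t=0$. The references you appeal to ([Zhu20, Prop.\ 3.4], [CKL24, Prop.\ 3.1]) are not a first-variation sign argument; they supply precisely the geometric mechanism you are missing, namely a way to turn a leaf with strictly positive prescribed mean curvature into a contradiction with the curvature hypothesis.

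Concretely, the paper argues by contradiction: if $\tilde H_{t_0}>2\delta>0$ for some $t_0\in(0,\epsilon)$, it minimizes the brane functional $\mathcal{B}(\hat\Omega)=\int_{\partial\hat\Omega\setminus\Sigma_k}\rho_{k-1}\,d\mu-\delta\int_{\hat\Omega}\rho_{k-1}\,d\mu$ in the region bounded by $\Sigma_k$ and $\Sigma_{k,t_0}$, using the two leaves as barriers (this is where the strict inequalities $\tilde H_{\Sigma_k}<\delta<\tilde H_{\Sigma_{k,t_0}}$ enter). The resulting smooth minimizer has a component $\hat\Sigma_k$ on which the restricted map still has nonzero degree (Stokes), and it satisfies the stability inequality of Lemma~\ref{slicing;lem2} with an extra $+\delta^2$ term because its weighted mean curvature equals $\delta$ (Claim~\ref{induction;isometry;lem1}). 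One then builds a \emph{new} stable weighted slicing $\hat\Sigma_m\subset\cdots\subset\hat\Sigma_k$ below this brane and reruns the spectral Llarull argument of Section~\ref{spectral;Llarull;section} with hypothesis (\ref{isometry;eqn3}), which yields $0\le-\tfrac{\delta^2}{2}\vol(\hat\Sigma_m)$, a contradiction; hence $\tilde H_t\le 0$ for $t\in(0,\epsilon)$ and the weighted areas agree. Without this barrier/brane construction and the re-application of the slicing-plus-Dirac machinery at the new surface, your proposal does not close, so as written it has a genuine gap rather than an alternative proof.
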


\begin{proof}

Since $\Sigma_{k,t}$ is homologous to $\Sigma_k$, and $\Sigma_k$ is the minimizer of the weighted area among its homology class, we have

\begin{align*}
	0 &\leq \int_{\Sigma_{k,t}}\rho_{k-1}\, d\mu - \int_{\Sigma_{k,0}}\rho_{k-1}\, d\mu\\
	&\leq \int_0^t\int_{\Sigma_{k,s}} \rho_{k-1} w_s \left( H_{\Sigma_{k,s}} + \langle D_{\Sigma_{k-1}}\log\rho_{k-1},\, \nu_{\Sigma_{k,s}}\rangle \right)\, d\mu\, ds.
\end{align*}
Hence to prove Proposition \ref{induction;isometry;prop1}, it suffices to show that
	\begin{align}\label{isometry;eqn1}
		H_{\Sigma_{k,t}} + \langle D_{\Sigma_{k-1}}\log\rho_{k-1},\, \nu_{\Sigma_{k,t}}\rangle \leq 0
	\end{align}
	for all $t\in (0,\epsilon)$. Suppose in contrary that there exists $t_0\in (0,\epsilon)$ and $\delta >0$ such that
	\[	H_{\Sigma_{k,t_0}} + \langle D_{\Sigma_{k-1}}\log\rho_{k-1},\, \nu_{\Sigma_{k,t_0}}\rangle > 2\delta.\]
Note that the hypersurfaces $\Sigma_k$ and $\Sigma_{k,t_0}$ are non-intersecting. We consider the brane functional
	\begin{align}
		\mathcal{B}(\hat{\Omega}) = \int_{\partial\hat{\Omega}\setminus\Sigma_k}\rho_{k-1}\, d\mu - \delta\int_{\hat{\Omega}}\rho_{k-1}\, d\mu,,
	\end{align}
for Borel subsets $\hat{\Omega}$ of the region between $\Sigma_k$ and $\Sigma_{k,t_0}$, with finite perimeter and $\Sigma_k\subset\partial\hat{\Omega}$. Since
	\[	H_{\Sigma_{k}} + \langle D_{\Sigma_{k-1}}\log\rho_{k-1},\, \nu_{\Sigma_{k}}\rangle <\delta < H_{\Sigma_{k,t_0}} + \langle D_{\Sigma_{k-1}}\log\rho_{k-1},\, \nu_{\Sigma_{k,t_0}}\rangle,\]
	the hypersurfaces $\Sigma_k$ and $\Sigma_{k,t_0}$ serve as barriers. Consequently we can find a Borel set $\hat{\Omega}$ which is the minimizer of $\mathcal{B}$, such that $\partial\hat{\Omega}\setminus \Sigma_k$ is a smooth two-sided hypersurface disjoint from $\Sigma_k$ and $\Sigma_{k,t_0}$. 

Next, recall from Proposition \ref{slicing;exist;prop} that there is a smooth 1-Lipschitz map $\Phi_k: (\Sigma_k, g|_{\Sigma_k})\to (S^{n-m}\times\mathbb{T}^{m-k}, g_{S^{n-m}} + g_{\mathbb{T}^{m-k}})$ of non-zero degree, such that $\Phi_k = \Phi_{k-1}|_{\Sigma_k}$ and $\deg(\Phi_k) = \deg(\Phi_{k-1})$. Since $\partial\hat{\Omega}\setminus \Sigma_k$ is homologous to $\Sigma_k$, Stoke's theorem implies that the map $ \Phi_{k-1}|_{\partial\hat{\Omega}\setminus \Sigma_k}$ has non-zero degree. We take the connected component $\hat{\Sigma}_k$ of $\partial\hat{\Omega}\setminus \Sigma_k$ such that the map $\hat{\Phi}_k = \Phi_{k-1}|_{\hat{\Sigma}_k}: (\hat{\Sigma}_k, g|_{\hat{\Sigma}_k})\to (S^{n-m}\times\mathbb{T}^{m-k}, g_{S^{n-m}} + g_{\mathbb{T}^{m-k}})$ has non-zero degree.

Let $\hat{u}_k$ be a first eigenfunction of the stability operator on $\hat{\Sigma}_k$, and let $\hat{\rho}_k = \hat{u}_k\rho_{k-1}$. We now apply Proposition \ref{slicing;exist;prop} to $(\hat{\Sigma}_k, \hat{\rho}_k)$, and find a stable weighted slicing 
	\[	\hat{\Sigma}_m\subset\cdots\subset\hat{\Sigma}_k\]
with smooth 1-Lipschitz maps $\hat{\Phi}_j: (\hat{\Sigma}_j, g|_{\hat{\Sigma}_j}) \to (S^{n-m}\times\mathbb{T}^{m-j}, g_{S^{n-m}} + g_{\mathbb{T}^{m-j}})$ of non-zero degree, for $j\in \{k,\dots, m\}$.

\begin{claim}\label{induction;isometry;lem1}
	The function $\hat{\rho}_k$ satisfies the inequality	
	\begin{align*}
	&\Delta_{\hat{\Sigma}_k}\log\hat{\rho}_k + \frac{1}{2} |D_{\hat{\Sigma}_{k}}\log\hat{\rho}_{k}|^2	\\
	\leq\, & \Delta_{\Sigma_{k-1}}\log\rho_{k-1} + \frac{1}{2} |D_{\Sigma_{k-1}}\log\rho_{k-1}|^2 -\frac{1}{2}(R_{\Sigma_{k-1}} - R_{\hat{\Sigma}_k} + |A_{\hat{\Sigma}_k}|^2 + \delta^2) - \frac{1}{2} |D_{\hat{\Sigma}_{k}}\log \hat{u}_{k}|^2.
	\end{align*}
\end{claim}

\begin{proof}[Proof of Claim \ref{induction;isometry;lem1}]
	The proof is similar to the proof of Lemma \ref{slicing;lem2} as in \cite{BH24, SY79, SY17}. Recall that $\hat{u}_k$ is a first eigenfunction of the stability operator of $\mathcal{B}$, thus 
	
	\begin{align*}
	0 &\leq -\Delta_{\hat{\Sigma}_k}\hat{u}_k - \langle D_{\hat{\Sigma}_k}\rho_{k-1},\, 	D_{\hat{\Sigma}_k}\log\hat{u}_k\rangle - (\Ric_{\Sigma_{k-1}}(\nu_{\hat{\Sigma}_k},\,\nu_{\hat{\Sigma}_k}) + |A_{\hat{\Sigma}_k}|^2)\hat{u}_k\\
	&\quad + (D_{\Sigma_{k-1}}^2\log\rho_{k-1})( \nu_{\hat{\Sigma}_k},\,\nu_{\hat{\Sigma}_k})\hat{u}_k.
	\end{align*}
Using the Gauss equation and $H_{\hat{\Sigma}_k} + \langle D_{\Sigma_{k-1}}\log\rho_{k-1},\, \nu_{\hat{\Sigma}_k}\rangle = \delta$, we obtain

\begin{align*}
	0 &\leq -\Delta_{\hat{\Sigma}_k}\log\hat{u}_k - 	|D_{\hat{\Sigma}_{k}}\log\hat{u}_{k}|^2 - \langle D_{\hat{\Sigma}_k}\log\rho_{k-1},\, 	D_{\hat{\Sigma}_k}\log\hat{u}_k\rangle\\
	&\quad - \frac{1}{2} (R_{\Sigma_{k-1}} - R_{\hat{\Sigma}_k}  + |A_{\hat{\Sigma}_k}|^2 + \delta^2)\\
	&\quad + \Delta_{\Sigma_{k-1}}\log\rho_{k-1} - \Delta_{\hat{\Sigma}_k}\log\rho_{k-1} + \frac{1}{2} \langle D_{\Sigma_{k-1}}\log\rho_{k-1},\, \nu_{\hat{\Sigma}_k}\rangle^2\\
	&= -\Delta_{\hat{\Sigma}_k}\log\hat{\rho}_k - 	\frac{1}{2}|D_{\hat{\Sigma}_{k}}\log\hat{\rho}_{k}|^2 - \frac{1}{2} (R_{\Sigma_{k-1}} - R_{\hat{\Sigma}_k} +  |A_{\hat{\Sigma}_k}|^2 + \delta^2)\\
	&\quad +\Delta_{\Sigma_{k-1}}\log\rho_{k-1} + \frac{1}{2}|D_{\Sigma_{k-1}}\log\rho_{k-1}|^2 - \frac{1}{2} |D_{\hat{\Sigma}_{k}}\log\hat{u}_{k}|^2,
\end{align*}
where we have used $\langle D_{\Sigma_{k-1}}\log\rho_{k-1},\, \nu_{\hat{\Sigma}_k}\rangle^2 = |D_{\Sigma_{k-1}}\log\rho_{k-1}|^2 - |D_{\hat{\Sigma}_{k}}\log\rho_{k-1}|^2$ in the last step. 
\end{proof}

Next, applying Proposition \ref{slicing;prop1} to the slicing
	\[	\hat{\Sigma}_m\subset\cdots\subset\hat{\Sigma}_k,\]
we get

\begin{align}\label{isometry;eqn2}
0 \leq& 	\int_{\hat{\Sigma}_m}\hat{\rho}_{m-1}|D_{\hat{\Sigma}_m}f|^2  - \frac{1}{2}\int_{\hat{\Sigma}_m}(R_{\hat{\Sigma}_k} - R_{\hat{\Sigma}_m} + |A_{\hat{\Sigma}_m}|^2)\hat{\rho}_{m-1}f^2\\
		&\notag + \int_{\hat{\Sigma}_m} \left(\Delta_{\hat{\Sigma}_k}\log\hat{\rho}_{k} + \frac{1}{2} |D_{\hat{\Sigma}_k}\log\hat{\rho}_{k}|^2
		 \right) \hat{\rho}_{m-1}f^2 \\
		&\notag - \int_{\hat{\Sigma}_m} \left(\Delta_{\hat{\Sigma}_m}\log\hat{\rho}_{m-1} + \frac{1}{2} |D_{\hat{\Sigma}_m}\log\hat{\rho}_{m-1}|^2
		 \right) \hat{\rho}_{m-1}f^2	
\end{align}
for all $f\in C^{\infty}(\hat{\Sigma}_m)$. Applying Claim \ref{induction;isometry;lem1} and Corollary \ref{slicing;cor1} to (\ref{isometry;eqn2}), we obtain

\begin{align}\label{isometry;eqn3}
	0 \leq& 	\int_{\hat{\Sigma}_m}\hat{\rho}_{m-1}|D_{\hat{\Sigma}_m}f|^2  - \frac{1}{2}\int_{\hat{\Sigma}_m}((n-m)(n-m-1) - R_{\hat{\Sigma}_m} + \delta^2)\hat{\rho}_{m-1}f^2\\
		&\notag - \int_{\hat{\Sigma}_m} \left(\Delta_{\hat{\Sigma}_m}\log\hat{\rho}_{m-1} + \frac{1}{2} |D_{\hat{\Sigma}_m}\log\hat{\rho}_{m-1}|^2
		 \right) \hat{\rho}_{m-1}f^2	
\end{align}
for all $f\in C^{\infty}(\hat{\Sigma}_m)$. Now, recall that on the slice $\hat{\Sigma}_m^{n-m}$ there is a 1-Lipschitz map $\hat{\Phi}_m:\hat{\Sigma}_m\to S^{n-m}$ of non-zero degree. We can repeat the argument in Section \ref{spectral;Llarull;section}, but with the assumption (\ref{spectral;Llarull;assumption}) replaced by (\ref{isometry;eqn3}). This leads to

\begin{align*}
	0\leq 	-2\int_{\hat{\Sigma}_m}\left( \frac{(n-m)(n-m-1)}{4}|s|^2 + \langle \R^Es,\, s\rangle + \frac{\delta^2}{4}\right) \leq -\frac{\delta^2}{2}\vol(\hat{\Sigma}_m),
\end{align*}
a contradiction. This finishes the proof of  Proposition \ref{induction;isometry;prop1}.
\end{proof}

\bigskip

\section{Proof of Main Theorems}

We are now ready to assemble the ingredients from the previous sections. Throughout we keep the notation and slicing constructed in Proposition \ref{slicing;exist;prop}.

\subsection{Proof of Theorem \ref{main;thm;1}}

Consider any stable weighted slicing in $M$ of order $m$ as constructed in Proposition \ref{slicing;exist;prop}. On the bottom slice $\Sigma_m$, the stability inequality (\ref{slicing;eqn4}) implies
		\begin{align*}
	0 \leq& \int_{\Sigma_m}\rho_{m-1}|D_{\Sigma_m}f|^2  - \frac{1}{2}\int_{\Sigma_m} \left(  (n-m)(n-m-1) -  R_{\Sigma_m}\right) \rho_{m-1}f^2\\
		&\notag - \int_{\Sigma_m} \left(\Delta_{\Sigma_{m}}\log\rho_{m-1} + \frac{1}{2} |D_{\Sigma_{m}}\log\rho_{m-1} |^2
		 \right) \rho_{m-1}f^2
	\end{align*}
for all $f\in C^{\infty}(\Sigma_m)$. On the other hand, from Proposition \ref{slicing;exist;prop} there is a 1-Lipschitz map $\Phi_m:(\Sigma_m, g_{\Sigma_m}) \to (S^{n-m}, g_{S^{n-m}})$ of non-zero degree.  Therefore, it follows from Theorem \ref{spectral;Llarull;thm} that  $(\Sigma_m, g_{\Sigma_m})$ is isometric to $(S^{n-m}, g_{S^{n-m}})$ and the scalar curvature satisfies  $R_{\Sigma_m} = (n-m)(n-m-1)$ for the bottom slice $\Sigma_m$ in all stable weighted slicings of order $m$.

We now propagate the rigidity of the bottom slice through the entire slicing: the foliations of each slice built in Section \ref{foliation;section} allow us to carry this isometry upward, slice by slice, until it extends to the whole manifold $M$.

\begin{proposition}[Slice-by-slice rigidity]\label{induction;isometry;prop2}
Assume the hypotheses of Theorem~\ref{main;thm;1}.  
Let 
\[
  \Sigma_m \subset \Sigma_{m-1} \subset \cdots \subset \Sigma_1 \subset 
  \Sigma_0 = M
\]
be any stable weighted slicing of order \(m\) constructed in
Proposition~\ref{slicing;exist;prop}.  Then, for every 
\(k\in\{0,1,\dots,m\}\), we have $R_{\Sigma_k} = (n-m)(n-m-1)$ and $\rho_{k-1}$ is constant on the slice $\Sigma_k$. In addition, $\Sigma_{k-1}$  is isometrically covered by $\Sigma_k\times\mathbb{R}$.
\end{proposition}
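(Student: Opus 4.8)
The plan is to prove Proposition~\ref{induction;isometry;prop2} by downward induction on $k$, starting from the bottom slice $\Sigma_m$ and working up to $\Sigma_0 = M$. The base case $k=m$ is already established: the spectral Llarull theorem (Theorem~\ref{spectral;Llarull;thm}) applied to the 1-Lipschitz degree-nonzero map $\Phi_m : \Sigma_m \to S^{n-m}$, together with the stability inequality~\eqref{slicing;eqn4}, forces $(\Sigma_m, g_{\Sigma_m}) \cong (S^{n-m}, g_{S^{n-m}})$, hence $R_{\Sigma_m} = (n-m)(n-m-1)$, and simultaneously yields $D_{\Sigma_m}\log\rho_{m-1} \equiv 0$, i.e.\ $\rho_{m-1}$ is constant on $\Sigma_m$; in fact the conclusion $D_{\Sigma_m}\log\rho_k = 0$ also holds since $\rho_m = \rho_{m-1}|_{\Sigma_m} \cdot u_m$ and the stability eigenfunction $u_m$ is forced to be constant in the equality case.

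For the inductive step, suppose the conclusion holds for the slice $\Sigma_k$: namely $R_{\Sigma_k} = (n-m)(n-m-1)$ and $\rho_{k-1}$ (equivalently $\rho_k$) is constant on $\Sigma_k$. I would first invoke Lemma~\ref{induction;lem} to conclude that $\Sigma_k$ is totally geodesic in $\Sigma_{k-1}$, that $D_{\Sigma_{k-1}}\log\rho_{k-1} = 0$ along $\Sigma_k$, and that the identity $R_{\Sigma_{k-1}} - 2\Delta_{\Sigma_{k-1}}\log\rho_{k-1} - |D_{\Sigma_{k-1}}\log\rho_{k-1}|^2 = (n-m)(n-m-1)$ holds along $\Sigma_k$. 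Then Lemma~\ref{foliation;lem} provides a local foliation $\{\Sigma_{k,t}\}_{t\in(-\epsilon,\epsilon)}$ of a neighborhood of $\Sigma_k$ in $\Sigma_{k-1}$ by graphs with positive speed, each of which has $H_{\Sigma_{k,t}} + \langle D_{\Sigma_{k-1}}\log\rho_{k-1}, \nu_{\Sigma_{k,t}}\rangle$ constant on $\Sigma_{k,t}$; and Proposition~\ref{induction;isometry;prop1} shows that each leaf $\Sigma_{k,t}$ is again a weighted minimizer with $\mathcal{H}^{n-k}_{\rho_{k-1}}(\Sigma_{k,t}) = \mathcal{H}^{n-k}_{\rho_{k-1}}(\Sigma_k)$. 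Since $\Sigma_{k,t}$ is a minimizer homologous to $\Sigma_k$ and inherits the 1-Lipschitz degree-nonzero map $\Phi_{k-1}|_{\Sigma_{k,t}}$ to $S^{n-m}\times\mathbb{T}^{m-k}$, the same argument that applied to $\Sigma_k$ (via its own stable weighted slicing down to order $m$, Corollary~\ref{slicing;cor2}, and Theorem~\ref{spectral;Llarull;thm}) forces $R_{\Sigma_{k,t}} = (n-m)(n-m-1)$ and $\rho_{k-1}$ constant on each $\Sigma_{k,t}$ as well.

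Having a whole family of leaves satisfying these equalities, I would then set up the splitting. Parametrize the foliated neighborhood by $(x,t) \in \Sigma_k \times (-\epsilon,\epsilon)$ via the normal graphs, so that $g_{\Sigma_{k-1}} = \phi^2\, dt^2 + g_t$ where $g_t$ is the induced metric on $\Sigma_{k,t}$ and $\phi > 0$ is the lapse. Because each $\Sigma_{k,t}$ is totally geodesic (Lemma~\ref{induction;lem} applies to each leaf) and $\rho_{k-1}$ is constant along each leaf, the first-variation / constancy-of-weighted-area data pins down $\partial_t g_t = 0$ and $\phi$ independent of $x$; after reparametrizing $t$ we get $g_{\Sigma_{k-1}} = dt^2 + g_{\Sigma_k}$ locally, i.e.\ $\Sigma_{k-1}$ is locally isometric to $\Sigma_k \times \mathbb{R}$, and then a standard analytic-continuation / completeness argument (using that $\Sigma_{k-1}$ is closed and connected, so the universal cover splits) upgrades this to: $\Sigma_{k-1}$ is isometrically covered by $\Sigma_k \times \mathbb{R}$. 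Finally, the constancy identity (iii) of Lemma~\ref{induction;lem} combined with $D\log\rho_{k-1} = 0$ on each leaf and the splitting gives $R_{\Sigma_{k-1}} = (n-m)(n-m-1)$ on the whole foliated region, and since $\Sigma_{k-1}$ is covered by $\Sigma_k\times\mathbb{R}$ with $\Sigma_k$ (eventually) round, this holds on all of $\Sigma_{k-1}$; the constancy of $\rho_{k-2}$ on $\Sigma_{k-1}$ follows because $\rho_{k-1} = \rho_{k-2}|_{\Sigma_{k-1}}\cdot u_{k-1}$ with $u_{k-1}$ the stability eigenfunction, which the equality case forces to be constant. This completes the induction.

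The main obstacle I expect is the splitting step: promoting the \emph{pointwise} equalities on each leaf (totally geodesic, constant weight, matched weighted areas) to the \emph{global metric product} structure $dt^2 + g_{\Sigma_k}$, and then to an isometric covering by $\Sigma_k\times\mathbb{R}$ rather than merely a local foliation. The delicate points are: (a) controlling the lapse $\phi$ — showing it is constant in $x$, which should follow from differentiating the constant-weighted-mean-curvature condition and using that $A_{\Sigma_{k,t}} \equiv 0$ and $D_{\Sigma_{k,t}}\log\rho_{k-1} \equiv 0$ kill the relevant terms in the Jacobi-type equation, forcing $\Delta_{\Sigma_{k,t}}\log\phi = 0$ hence $\phi$ constant on closed leaves; (b) checking $\partial_t g_t = 0$ from the vanishing of the second fundamental form of each leaf; and (c) the global continuation, where one extends the foliation maximally and argues by compactness of $\Sigma_k$ and completeness considerations — this is where the covering (rather than isometry) statement enters, and it mirrors the classical Cheeger–Gromoll-type splitting arguments used in \cite{CKL24, Zhu20}.
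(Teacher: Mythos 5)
Your proposal follows essentially the same route as the paper: the same induction on the slicing order, with Lemma~\ref{induction;lem}, Lemma~\ref{foliation;lem} and Proposition~\ref{induction;isometry;prop1} used to exhibit each leaf $\Sigma_{k,t}$ as a $k$-th order slice of another stable weighted slicing (so the induction hypothesis, which must be formulated for \emph{all} slicings, applies to it), followed by the same splitting computation ($A_{\Sigma_{k,t}}\equiv 0$ giving $\partial_t g_t=0$, harmonicity of the lapse on the closed leaves giving $\phi=\phi(t)$) and a continuation argument yielding the covering by $\Sigma_k\times\mathbb{R}$ and the constancy of the weights. The only cosmetic differences are that the paper obtains $\Delta_{\Sigma_{k,t}}\phi=0$ from the scalar curvature identity $R_{\Sigma_{k-1}}=R_{\Sigma_{k,t}}-2\phi^{-1}\Delta_{\Sigma_{k,t}}\phi$ rather than from the linearized mean-curvature (Jacobi) equation, and it cites \cite{BBN10} for the final continuity step.
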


\begin{proof}
	 We prove by induction on $k$. The base case has been already settled, so that $R_{\Sigma_m} = (n-m)(n-m-1)$ for all $\Sigma_m$ in all stable weighted slicing of order $m$. Now assume that all  $k$-th order slice $\Sigma_k$ satisfy $R_{\Sigma_k} = (n-m)(n-m-1)$, we want to prove that for every $(k-1)$-th order slice $\Sigma_{k-1}$, we have $R_{\Sigma_{k-1}} = (n-m)(n-m-1)$ and $\Sigma_{k-1}$ is isometrically covered by $\Sigma_k\times\mathbb{R}$.
	
	From Lemma \ref{foliation;lem} and Proposition \ref{induction;isometry;prop1}, we can find a local foliation $\{\Sigma_{k,t}\}_{t\in (-\epsilon, \epsilon)}$ of $\Sigma_k$ in $\Sigma_{k-1}$ such that each $\Sigma_{k,t}$ is also a minimizer of the weighted area. Hence $\Sigma_{k,t}$ is also a $k$-th order slice in a stable weighted slicing of the form	
	\[	\Sigma_{m,t}\subset\cdots\subset\Sigma_{k,t}\subset \Sigma_{k-1}\subset\cdots\subset \Sigma_0=M^n\]
	 for all $t\in (-\epsilon, \epsilon)$. If follows from the induction hypothesis that $\Sigma_{k,t}$ also satisfies	 
	 \begin{align}\label{isometry;eqn4}
	 	R_{\Sigma_{k,t}} = (n-m)(n-m-1)
	 \end{align}
for all $t\in (-\epsilon, \epsilon)$. From Lemma \ref{induction;lem}, we have	 
	 \[	A_{\Sigma_{k,t}} = 0,\quad\text{and}\quad D_{\Sigma_{k-1}}\log\rho_{k-1} = 0\]
	 on the local foliation $\{\Sigma_{k,t}\}_{t\in (-\epsilon, \epsilon)}$. This together with Lemma \ref{induction;lem}(iii) imply	 
	 \begin{align}\label{isometry;eqn5}
	 	R_{\Sigma_{k-1}}  = (n-m)(n-m-1)
	 \end{align}
	 on the local foliation $\{\Sigma_{k,t}\}_{t\in (-\epsilon, \epsilon)}$. On the other hand, we can write	 
	 	\[	g_{\Sigma_{k-1}} = \phi^2dt^2 + g_{\Sigma_{k,t}}\]
	 on the local foliation $\{\Sigma_{k,t}\}_{t\in (-\epsilon, \epsilon)}$, where $\phi$ is the lapse function. Combining (\ref{isometry;eqn4}) and (\ref{isometry;eqn5}), we obtain	 
	 \begin{align*}
	 	(n-m)(n-m-1) &= R_{\Sigma_{k-1}} \\
	 	&= R_{\Sigma_{k,t}} - 2\phi^{-1}\Delta_{\Sigma_{k,t}}\phi\\
	 	&= (n-m)(n-m-1) - 2\phi^{-1}\Delta_{\Sigma_{k,t}}\phi,
	 \end{align*}
	 which implies	 
	 \begin{align}
	 	\Delta_{\Sigma_{k,t}}\phi = 0
	 \end{align}
	 on the local foliation $\{\Sigma_{k,t}\}_{t\in (-\epsilon, \epsilon)}$. Consequently, $\phi = \phi(t)$ is a function depending only on $t$. Lemma \ref{foliation;lem} implies $\phi(0) = 0$. In addition, $A_{\Sigma_{k,t}} = 0$ implies $\partial_t g_{\Sigma_{k,t}} = 0$. Consequently,
	 	\[	g_{\Sigma_{k-1}} = \phi^2dt^2 + g_{\Sigma_{k}}\] 
	 on the local foliation $\{\Sigma_{k,t}\}_{t\in (-\epsilon, \epsilon)}$. Using a continuity argument as in \cite[Proposition 11]{BBN10}, we  conclude that $\rho_{k-1}$ is constant and $\Sigma_{k-1}$ is isometrically covered by $\Sigma_k\times\mathbb{R}$.
\end{proof}

\medskip

To finish the proof of Theorem \ref{main;thm;1}, we apply Proposition \ref{induction;isometry;prop2} with the base case that $(\Sigma_m, g_{\Sigma_m})$ is isometric to $(S^{n-m}, g_{S^{n-m}})$. Therefore, $(M,g)$ is isometrically covered by $(S^{n-m}\times\mathbb{R}^m,\, g_{S^{n-m}} + g_{\mathbb{R}^m})$ and $\psi$ is a constant function.

\bigskip

\subsection{Proof of Theorem \ref{main;thm;2}}
In the first step, we construct a $\mu$-bubble that allows us to apply Theorem \ref{main;thm;1}. Denote the projection of $\Phi$ onto the factors by $\phi:M\to S^{n-m}\times \T^m$ and  $\varphi:M\to [-1,1]$.	 By assumption, the map $\pr_{S^{n-m}}\circ\phi = \pr_{S^{n-m}}\circ\Phi$ is 1-Lipschitz. Let $\Theta$ be a top-form of $S^{n-m}\times\T^m$ such that $\int_{S^{n-m}\times\T^m}\Theta = 1$. Define the pull-back form $\omega = \phi^*\Theta$. By Sard's theorem, we can find a regular value $t_0\in (-1,1)$ of the map $\varphi$. Let $\hat{\Sigma} = \varphi^{-1}(t_0)$. Because $M$ is connected and $\varphi$ is continuous, and by assumption $\Phi(\partial_{-}M)\subset S^{n-m}\times \T^m \times\{-1\}$ and $\Phi(\partial_{+}M)\subset S^{n-m}\times \T^m \times\{1\}$, we see that  $\varphi(M)$ is a connected subset of $[-1,1]$ containing both $-1$ and $1$. It follows that $\varphi(M) = [-1,1]$ and $ \varphi^{-1}(t)$ is non-empty for all regular values $t$. So $\hat{\Sigma}$ is a smooth, orientable and embedded hypersurface in $M$. By the coarea formula,
\begin{align}\label{bandwidth;eqn1}
	\deg(\Phi) = \int_M	d\varphi\wedge\omega = \int_{-1}^1\left(\int_{\varphi^{-1}(t)}\omega\right) dt
\end{align}
On the other hand, if $t_1 < t_2$ are two regular values, the Stokes theorem gives
\begin{align}\label{bandwidth;eqn2}
	0 = \int_{\varphi^{-1}([t_1, t_2])} d\omega = \int_{\varphi^{-1}(t_2)}\omega - \int_{\varphi^{-1}(t_1)}\omega.
\end{align}
Putting (\ref{bandwidth;eqn1}) and (\ref{bandwidth;eqn2}) together, we obtain
\begin{align}\label{bandwidth;eqn3}
	\int_{\hat{\Sigma}}\omega = \deg(\Phi).
\end{align}

Now, suppose in contrary that
\[	d(\partial_-M, \partial_+M) > \frac{2\pi}{\sqrt{\sigma}} := L.\]
We can find $\epsilon>0$ and a regular value $t_0\in (0,1)$ of $\varphi$ such that 
\[	d(\partial_-M, \hat{\Sigma}) \geq  L+\epsilon ,\]
where $\hat{\Sigma} = \varphi^{-1}(t_0)$. We then find a smooth function $\beta:\varphi^{-1}([0, t_0])\to (0,L)$
	such that $|\text{Lip}\, \beta|\leq 1-\epsilon$, $\beta\to 0$ at $\partial_{-}M$ and $\beta\to L$ at $\hat{\Sigma}$. Consider the function $h:\varphi^{-1}([0, t_0])\to\mathbb{R}$ given by
	\[	h(x) = -2(1-\epsilon)\frac{\pi}{L}\tan\left(\frac{\pi}{L}\beta(x) - \frac{\pi}{2} \right).\]
Observe that
	\begin{align}\label{bandwidth;eqn4}
		&\frac{1}{2}h(x)^2 - |\nabla_M h|(x)\\
		\notag =&\, 2(1-\epsilon)^2\frac{\pi^2}{L^2}\tan^2\left(\frac{\pi}{L}\beta(x) - \frac{\pi}{2} \right) - 2(1-\epsilon)\frac{\pi^2}{L^2}\sec^2\left(\frac{\pi}{L}\beta(x) - \frac{\pi}{2} \right)|\nabla\beta|\\
		\notag \geq&\, -(1-\epsilon)^2\, \frac{\sigma}{2}.
	\end{align}

Next, let  $\Omega_0$ be a reference Caccioppoli set in $\varphi^{-1}([0, t_0])$ such that $\hat{\Sigma}\subset\Omega_0$. By \cite{CL24}, we can find a warped $\mu$-bubble $\Omega$ minimizing
	\[\mathcal{A}(\Omega) = \int_{\partial^*\Omega}e^{\psi} d\mathcal{H}^{n-1} - \int_{\Omega}(\chi_{\Omega}-\chi_{\Omega_0})h e^{\psi} \, d\mathcal{H}^n\]
among all Caccioppoli sets $\tilde{\Omega}$ in $\varphi^{-1}([0, t_0])$ with $\tilde{\Omega}\Delta\Omega_0\subset \varphi^{-1}((0, t_0))$. Then $\partial\Omega\setminus\hat{\Sigma}$ is a smooth closed, two-sided hypersurface disjoint from $\partial_-M$ and $\hat{\Sigma}$. By Stoke's theorem and (\ref{bandwidth;eqn3}), 
\begin{align*}
	\deg(\phi|_{\partial\Omega\setminus\hat{\Sigma}}) = \int_{\partial\Omega\setminus\hat{\Sigma}}\omega = 	\int_{\hat{\Sigma}}\omega = \deg(\Phi) \neq 0.
\end{align*}
We take $\Sigma$ to be the connected component of $\partial\Omega\setminus\hat{\Sigma}$  such that the map $\Psi:\Sigma\to S^{n-m}\times\T^m$ defined by $\Psi := \phi|_{\Sigma}$ has $\deg(\Psi)\neq 0$. 

To summarize, we obtain a $\mu$-bubble $\Sigma\subset M$ and a smooth map $\Psi:\Sigma\to S^{n-m}\times\T^m$ such that $\deg(\Psi)\neq 0$ and $\pr_{S^{n-m}}\circ\Psi$ is 1-Lipschitz. From first and the second variation formulae of wrapped $\mu$-bubble \cite{CL24}, we have
	\begin{align*}
		H_{\Sigma} &= -e^{-\psi}\langle D_M e^{\psi},\nu_{\Sigma}\rangle + h = -\langle D_M \psi,\nu_{\Sigma}\rangle + h,
	\end{align*}
and
\begin{align*}
	0 &\leq -\Delta_\Sigma f  - (\Ric_M(\nu_\Sigma,\, \nu_\Sigma)+ |A_{\Sigma}|^2)f +(D_M^2 \log e^{\psi})(\nu_{\Sigma},\, \nu_{\Sigma})f  \\
	&\quad  - \langle D_{\Sigma} \log e^{\psi},\, D_{\Sigma}f\rangle - \langle D_Mh,\, \nu_{\Sigma}\rangle f
\end{align*}
for all $f\in C^{\infty}(\Sigma)$. Hence, we can find a positive function $u\in C^{\infty}(\Sigma)$ such that 

\begin{align*}
	0 &\leq  -\Delta_{\Sigma}\log u - |D_{\Sigma}\log u|^2  - 	\frac{1}{2}(R_M - R_{\Sigma}+ |A_{\Sigma}|^2 + H_{\Sigma}^2)\\
	&\quad +(D_M^2 \psi)(\nu_{\Sigma},\, \nu_{\Sigma}) - \langle D_{\Sigma} \psi,\, D_{\Sigma}\log u\rangle - \langle D_Mh,\, \nu_{\Sigma}\rangle\\
	&\leq -\Delta_{\Sigma}\log u - |D_{\Sigma}\log u|^2  - 	\frac{1}{2}(R_M - R_{\Sigma} + |A_{\Sigma}|^2) - \frac{1}{2}H_{\Sigma}^2 \\
	&\quad + \Delta_M\psi - \Delta_{\Sigma}\psi - H_{\Sigma}\langle D_M\psi,\,\nu_{\Sigma}\rangle - \langle D_{\Sigma} \psi,\, D_{\Sigma}\log u\rangle - \langle D_Mh,\, \nu_{\Sigma}\rangle.
\end{align*}
Define $\rho = u e^{\psi}$. From the above we obtain

\begin{align}\label{bandwidth;eqn5}
	0 &\leq -\Delta_{\Sigma}\log\rho -\frac{1}{2}|D_{\Sigma}\log\rho|^2 - 	\frac{1}{2}(R_M - R_{\Sigma}+ |A_{\Sigma}|^2) - \frac{1}{2}H_{\Sigma}^2\\
	\notag &\quad + \Delta_M\psi + \frac{1}{2}(|D_M\psi|^2 - \langle D_M\psi,\,\nu_{\Sigma}\rangle^2) - H_{\Sigma}\langle D_M\psi,\,\nu_{\Sigma}\rangle - \langle D_Mh,\, \nu_{\Sigma}\rangle\\
	\notag &\leq -\Delta_{\Sigma}\log\rho -\frac{1}{2}|D_{\Sigma}\log\rho|^2 - 	\frac{1}{2}(R_M - R_{\Sigma}+ |A_{\Sigma}|^2) + \Delta_M\psi + \frac{1}{2}|D_M\psi|^2\\
	\notag &\quad - \left(\frac{1}{2}h^2 - | D_Mh|\right).
\end{align}
Applying (\ref{bandwidth;eqn4}) and the assumption
\[ -\Delta_M\psi -\frac{1}{2}|D_M\psi|^2 + \frac{1}{2}\Big(R_M - (n-m)(n-m-1) - \sigma \Big) \geq 0,\]
we thus obtain
\begin{align}
	0 &\leq 	-\Delta_{\Sigma}\log\rho -\frac{1}{2}|D_{\Sigma}\log\rho|^2 - 	\frac{1}{2}( (n-m)(n-m-1)  - R_{\Sigma}) \\
	\notag &\quad - \left( 1 - (1-\epsilon)^2 \right)\frac{\sigma}{2}.
\end{align}

In conclusion, we have obtained a smooth map $\Psi:\Sigma^n\to S^{n-m}\times\T^m$ of non-zero degree such that $\pr_{S^{n-m}}\circ\Psi$ is 1-Lipschitz, and a smooth function $\log\rho$ on $\Sigma$ satisfying
\begin{align}\label{bandwidth;eqn6}
	-\Delta_{\Sigma}\log\rho -\frac{1}{2}|D_{\Sigma}\log\rho|^2 + 	\frac{1}{2}(R_{\Sigma} - (n-m)(n-m-1)   ) > 0.
\end{align}
	Theorem \ref{main;thm;1} then implies $R_{\Sigma} = (n-m)(n-m-1)$ and $\rho$ is a constant function, contradicting to (\ref{bandwidth;eqn6}). This finishes the proof of Theorem \ref{main;thm;2}.

\begin{remark}
	We note that if the curvature assumption in Theorem \ref{main;thm;2} is replaced by $R_M\geq (n-m)(n-m-1) + \sigma$, then we can actually obtain a sharper inequality for the bandwidth.
\end{remark}

\bigskip

\end{document}